\title{The extension property for domains with one singular point}
\author{Pekka Koskela and Zheng Zhu}
\address{Pekka Koskela\\
Department of Mathematics and Statistics\\
University of Jyv\"askyl\"a, P.O. Box 35 (MaD),
FI-40014, Jyv\"askyl\"a, Finland}
\email{\tt pekka.j.koskela@jyu.fi}
\address{Zheng Zhu\\
Department of Mathematics and Statistics\\
University of Jy\"askyl\"a, P.O. Box 35 (MaD),
FI-40014, Jyv\"askyl\"a, Finland}
\email{\tt zheng.z.zhu@jyu.fi}
\numberwithin{equation}{section}
\long\def\colred#1\endred{{\color{red}#1}}
\long\def\colgreen#1\endgreen{{\color{green}#1}}
\long\def\colmagenta#1\endmagenta{{\color{magenta}#1}}
\long\def\colblue#1\endblue{{\color{blue}#1}}
\long\def\colyellow#1\endyellow{{\color{yellow}#1}}
\theoremstyle{plain}
\newtheorem{theorem}[equation]{Theorem}
\newtheorem{proposition}[equation]{Proposition}
\newtheorem{lemma}[equation]{Lemma}
\newtheorem{conjecture}[equation]{Conjecture}
\theoremstyle{remark}
\theoremstyle{definition}
\newtheorem{defn}[equation]{Definition}
\newtheorem*{question*}{Question}
\subjclass[2010]{46E35}
\thanks{The authors have been supported  by the Academy of Finland via Centre of Excellence in Analysis and Dynamics Research (Project \#323960).}
\newcounter{prob}
\def\rr{{\mathbb R}}
\def\rn{{{\rr}^n}}
\def\fz{\infty}
\def\boz{{\Omega}}
\def\bint{{\ifinner\rlap{\bf\kern.25em--}
\int\else\rlap{\bf\kern.45em--}\int\fi}\ignorespaces}
\def\bbint{{\ifinner\rlap{\bf\kern.25em--}
\hspace{0.078cm}\int\else\rlap{\bf\kern.45em--}\int\fi}\ignorespaces}
\def\wp{{W^{1,p}(\boz)}}
\def\lp{{L^{p}(\boz)}}
\def\r{\right}
\def\lf{\left}
\def\setcolon{;}
\newcommand{\R}{\ensuremath{\mathbb{R}}}
\def\XXint#1#2#3{{\setbox0=\hbox{$#1{#2#3}{\int}$ }
\vcenter{\hbox{$#2#3$ }}\kern-.58\wd0}}
\def\vint_#1{\mathchoice%
        {\mathop{\kern 0.2em\vrule width 0.6em height 0.69678ex depth -0.58065ex
                \kern -0.8em \intop}\nolimits_{\kern -0.4em#1}}%
        {\mathop{\kern 0.1em\vrule width 0.5em height 0.69678ex depth -0.60387ex
                \kern -0.6em \intop}\nolimits_{#1}}%
        {\mathop{\kern 0.1em\vrule width 0.5em height 0.69678ex
            depth -0.60387ex
                \kern -0.6em \intop}\nolimits_{#1}}%
        {\mathop{\kern 0.1em\vrule width 0.5em height 0.69678ex depth -0.60387ex
                \kern -0.6em \intop}\nolimits_{#1}}}
\def\vintslides_#1{\mathchoice%
        {\mathop{\kern 0.1em\vrule width 0.5em height 0.697ex depth -0.581ex
                \kern -0.6em \intop}\nolimits_{\kern -0.4em#1}}%
        {\mathop{\kern 0.1em\vrule width 0.3em height 0.697ex depth -0.604ex
                \kern -0.4em \intop}\nolimits_{#1}}%
        {\mathop{\kern 0.1em\vrule width 0.3em height 0.697ex depth -0.604ex
                \kern -0.4em \intop}\nolimits_{#1}}%
        {\mathop{\kern 0.1em\vrule width 0.3em height 0.697ex depth -0.604ex
                \kern -0.4em \intop}\nolimits_{#1}}}
\begin{document}

\maketitle

\begin{abstract}
An arbitrary outward cuspidal domain is shown to be bi-Lipschitz equivalent to a Lipschitz outward cuspidal domain via a global transformation. This allows us to extend earlier Sobolev extension results on Lipschitz outward cuspidal domains from the work of Maz'ya and Poborchi to arbitrary outward cuspidal domains. We also establish a limit case of extension results on outward cuspidal domains.
\end{abstract}

\section{Introduction}
A domain $\boz\subset\rn$ is said to be a Sobolev $(p, q)$-extension domain for $1\leq q\leq p\leq\fz$, if there exists a bounded extension operator
$$ E:W^{1,p}(\boz)\to W^{1,q}(\rn),$$
such that for every $u\in W^{1, p}(\boz)$, we have $E(u)\in W^{1, q}(\rn)$ with 
\[\|E(u)\|_{W^{1, q}(\rn)}\leq C\|u\|_{W^{1,p}(\boz)}\]
for a constant $C$ independent of $u$. The smallest constant in the inequality above is denoted by $\|E\|$. In \cite{C61, Stein}, Calder\'on and Stein proved that if $\Omega\subset\mathbb R^n$ is a Lipschitz domain, then there exists a bounded linear extension operator $E: W^{k, p}(\Omega)\to W^{k, p}(\mathbb R^n)$, for all $k\geq 1$ and $1\leq p\leq \infty$. Here $W^{k, p}(\boz)$ is the class of those $L^p$-integrable functions whose weak derivatives up to order $k$ belong to $L^p(\Omega)$. In \cite{J81}, Jones introduced the notion of $(\varepsilon,\delta)$-domains which are generalizations of Lipschitz domains. He proved that, for every $(\varepsilon,\delta)$-domain, there exists a bounded linear extension operator $E: W^{k, p}(\Omega)\to W^{k, p}(\mathbb R^n)$, for all $k\geq 1$  and $1\leq p\leq\fz$. This has motivated the search for geometric characterizations for Sobolev extension domains. A geometric characterization of simply connected planar Sobolev $(2,2)$-extension domains was obtained in \cite{VGL79}. By a more recent sequence of results in \cite{pekkaJFA, KRZ1, KRZ2, ShJFA}, we understand the geometry of simply connected planar Sobolev $(p, p)$-extension domains, for all $1\leq p\leq\fz$. Furthermore, by \cite{ShZ16}, geometric characterizations of planar simply connected extension domains are also known in the case of homogeneous Sobolev spaces $L^{k, p}(\Omega)$ for $2<p<\infty$. Here $L^{k, p}(\boz)$ is the seminormed space of those locally integrable functions whose $k$-th order distributional partial derivatives belong to $L^p(\boz)$. However, no characterizations are available in the general setting. 

In this paper, we consider Sobolev extension properties for a class of quasiconvex Euclidean domains with only a single singular boundary point.  A domain $\boz\subset\rn$ is called quasiconvex, if for every $x, y\in\boz$ there exists a curve $\gamma_{x,y}\subset\boz$ connecting $x, y$ with $${\rm length}(\gamma_{x, y})\leq C|x-y|$$ for a constant $C$ independent of $x$ and $y$. We study the outward cuspidal domains defined by setting
\begin{equation}\label{cusp}
\boz^n_\psi:=\lf\{z=(t, x)\in (0, 1]\times\rr^{n-1}\setcolon |x|<\psi(t) \r\}\cup\{z=(t, x)\in[1, 2)\times\rr^{n-1}\setcolon |x|<\psi(1)\},
\end{equation}
where $\psi\colon (0,1]\to (0,\fz)$ is a left-continuous and increasing function. (Left-continuity is required just to ensure $\boz^n_\psi$ to be open. The term ``increasing'' is used in the non-strict sense.) If the left-continuous and increasing function $\psi$ is Lipschitz, $\boz_\psi^n$ is called a Lipschitz outward cuspidal domain. This kind of model domains have been widely studied, see Maz'ya and Poborchi's monograph \cite{Mazya} and references therein. From now on, every left-continuous and increasing function $\psi:(0, 1]\to(0, \fz)$ will be called a cuspidal function. Our class of domains $\boz_\psi^n$ was introduced in \cite{SPJZ}. It was shown in \cite{SPJZ} that for an arbitrary cuspidal function $\psi$, the Sobolev space $W^{1,p}(\boz_\psi^n)$ coincides with the Haj\l{}asz-Sobolev space $M^{1, p}(\boz_\psi^n)$ for all $1<p\leq\fz$. See \cite{Pitor} for the definition of the Haj\l{}asz-Sobolev space $M^{1, p}(\boz)$. 
\begin{figure}[h!tbp]
\centering
\includegraphics[width=0.5\textwidth]
{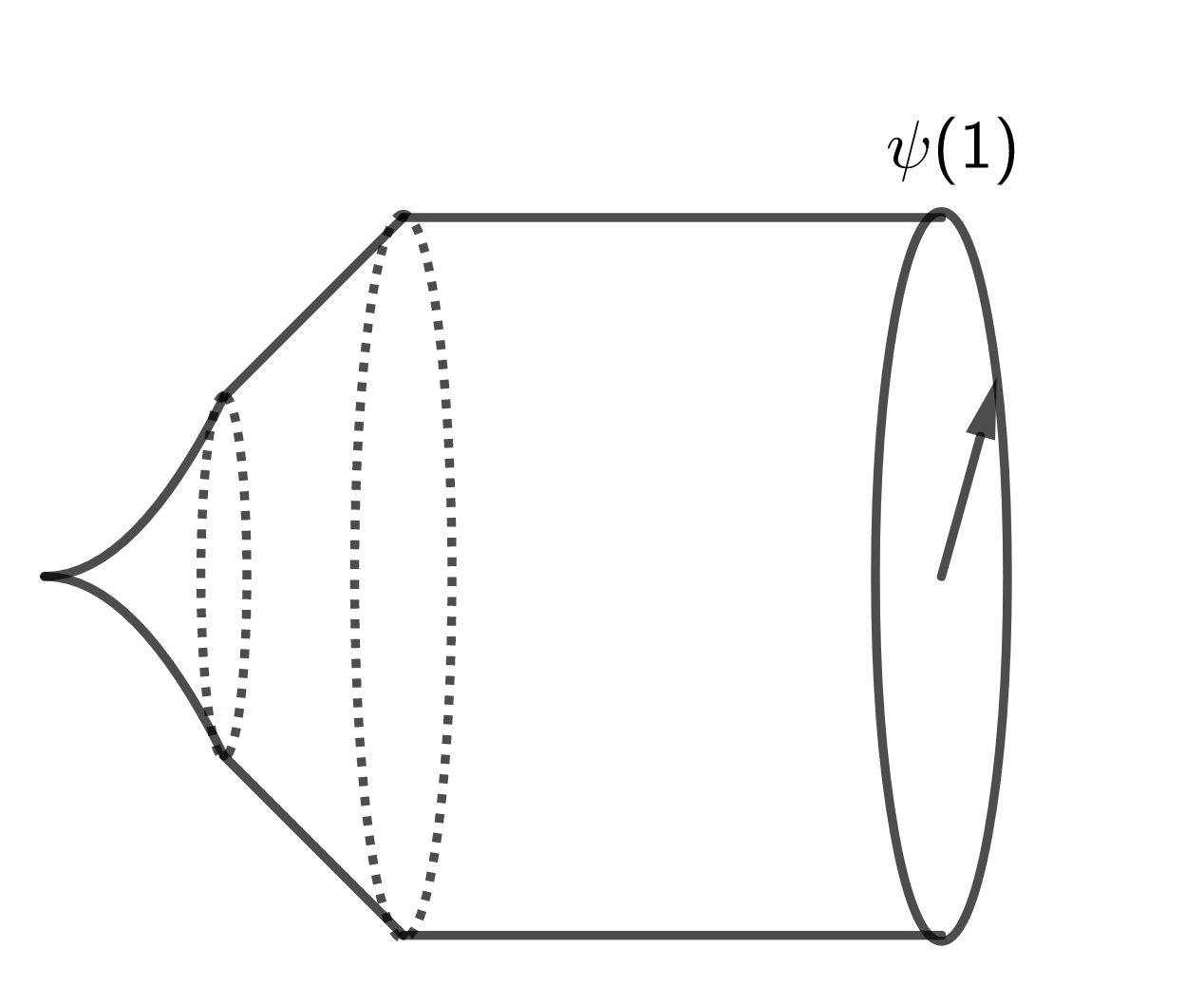}\label{fig:cusps}
\caption{An outward cuspidal domain $\boz_\psi^n$}
\end{figure}
Returning to the problem of geometric characterization for Sobolev extension domains, one may ask a natural question:
\begin{center}
\textbf{For which cuspidal functions $\psi$, the outward cuspidal domain $\boz_\psi^n$ is a Sobolev $(p, q)$-extension domain for given $1\leq q\leq p\leq\fz$?}
\end{center}
The first observation is that $\boz_\psi^n$ is a Sobolev $(\fz, \fz)$-extension domain, for every cuspidal function $\psi$. This follows from a result in \cite{hkt} since $\boz_\psi^n$ is always quasiconvex for an arbitrary cuspidal function $\psi$.  

In \cite{Mazya1, Mazya2, Mazya}, for certain Lipschitz cuspidal functions $\psi$, Maz'ya and Poborchi used integrability conditions on $\psi$ to characterize the Sobolev extension property for $\boz_\psi^n$. By transferring an outward cuspidal domain onto a Lipschitz outward cuspidal domain via a global bi-Lipschitz transformation, we obtain a more general version of their result. 
\begin{theorem}\label{prop:mazya}
Let $\psi:(0, 1]\to(0, \fz)$ be a cuspidal function such that $\psi\big|_{(0, 1]}>0$ and the function $\psi(t)/t$ is nondecreasing on $(0, 1]$ with $\lim_{t\to0}\psi(t)/t=0$. 
We have following statements.\\
{\textbf{$(1):$}} If 
\begin{equation}\label{eq:inc1}
\int_0^1\lf(\frac{t^s}{\psi(t)}\r)^{\frac{n}{s-1}}\frac{dt}{t}<\fz, 
\end{equation}
then there exists a bounded linear extension operator $E_1$ from $W^{1,p}(\boz^n_\psi)$ to $W^{1, q}(\rn)$ whenever $\frac{1+(n-1)s}{n}\leq p<\fz$ and $1\leq q\leq\frac{np}{1+(n-1)s}$, and a bounded linear extension operator $E_2$ from $W^{1, p}(\boz^n_\psi)$ to $W^{1, q}(\rn)$ whenever $\frac{1+(n-1)s}{2+(n-2)s}\leq p<\fz$ and $1\leq q\leq\frac{(1+(n-1)s)p}{1+(n-1)s+(s-1)p}$.\\
{\textbf{$(2):$}} If  
\begin{equation}\label{eq:inc2}
\int_0^1\lf(\frac{t^s}{\psi(t)}\r)^{\frac{n}{s-1}}\lf|\log\lf(\frac{\psi(t)}{t}\r)\r|^{-\alpha}\frac{dt}{t}<\fz
\end{equation}
with $\alpha=\frac{(n-2)p}{p+1-n}$, then there exists a bounded linear extension operator $E_3$ from $W^{1, p}(\boz^n_\psi)$ to $W^{1, q}(\rn)$ whenever $\frac{(n-1)^2s+(n-1)}{n}\leq p<\fz$ and $1\leq q\leq n-1$.

Furthermore, under the doubling condition
\begin{equation}\label{eq:doubling}
 \psi(2t)\leq C\psi(t),\ {\rm for}\ t\in\lf(0, \frac{1}{2}\r),
 \end{equation}
on $\psi$, the statements converse to $(1)$ and $(2)$ also hold.
\end{theorem}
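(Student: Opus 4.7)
The plan is to reduce Theorem \ref{prop:mazya} to the classical Lipschitz outward cuspidal extension theorems of Maz'ya and Poborchi via a global bi-Lipschitz change of coordinates, as announced in the abstract. Specifically, we first produce a global bi-Lipschitz self-homeomorphism $\Phi$ of $\rn$ sending $\boz_\psi^n$ onto a Lipschitz outward cuspidal domain $\boz_{\tilde\psi}^n$; we then transport the extension operators $E_1, E_2, E_3$ of \cite{Mazya} from $\boz_{\tilde\psi}^n$ back to $\boz_\psi^n$, and match the corresponding integrability conditions.

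For the construction of $\Phi$, the hypothesis that $\psi(t)/t$ is nondecreasing on $(0,1]$ and tends to $0$ at the origin is decisive. It allows us to choose a bi-Lipschitz reparametrisation $\phi\colon(0,2)\to(0,2)$ of the axial variable together with a radial rescaling in the $x$-variable, so that a map of the form
\[
\Phi(t,x)=\lf(\phi(t),\,\dfrac{\tilde\psi(\phi(t))}{\psi(t)}\,x\r)
\]
sends $\boz_\psi^n$ onto $\boz_{\tilde\psi}^n$ for a suitable Lipschitz cuspidal function $\tilde\psi$, with the Jacobians of $\Phi$ and $\Phi^{-1}$ bounded by absolute constants. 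The monotonicity of $\psi(t)/t$ is essential here, both to control the axial rescaling and to ensure that the radial factor $\tilde\psi(\phi(t))/\psi(t)$ varies with bounded logarithmic derivative in $t$. We then extend $\Phi$ by the identity outside a neighbourhood of the cusp tip to obtain a global bi-Lipschitz map of $\rn$.

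Since $W^{1,p}$ and $W^{1,q}$ are invariant under global bi-Lipschitz transformations of $\rn$ with equivalence of norms, the existence of a bounded $(p,q)$-extension operator on $\boz_\psi^n$ is equivalent to the existence of one on $\boz_{\tilde\psi}^n$. Applying the extension theorems of \cite{Mazya} on the Lipschitz side produces $E_1$ and $E_2$ in the ranges stated in part~(1), and $E_3$ in the range stated in part~(2), under Maz'ya-Poborchi's integrability hypotheses on $\tilde\psi$ parametrised by $s$. Changing variables $r=\phi(t)$ in those integrals converts the hypotheses on $\tilde\psi$ into precisely \eqref{eq:inc1} and \eqref{eq:inc2} on $\psi$.

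For the converse, the doubling condition \eqref{eq:doubling} on $\psi$ is transferred by $\phi$ into a doubling condition on $\tilde\psi$, so that the sharpness half of the Maz'ya-Poborchi theorems applies on the Lipschitz side; failure of \eqref{eq:inc1} or \eqref{eq:inc2} then forces non-extendability of $\boz_{\tilde\psi}^n$, and hence of $\boz_\psi^n$, in the corresponding $(p,q)$-range. The main obstacle is the construction of $\Phi$ in the first step: the reparametrisation $\phi$ and the radial factor must be designed so that the resulting map has uniformly bounded derivative and inverse derivative \emph{independently} of how quickly $\psi(t)/t$ decays at the origin, while simultaneously producing a target $\tilde\psi$ that is genuinely Lipschitz. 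Without the monotonicity assumption on $\psi(t)/t$, oscillatory behaviour would prevent any such global bi-Lipschitz straightening of the cusp, and the reduction to \cite{Mazya} would break down.
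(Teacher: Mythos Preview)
Your overall strategy matches the paper's: reduce to the Lipschitz case via a global bi-Lipschitz map, then transport the Maz'ya--Poborchi operators and verify that the integrability and doubling conditions transfer. However, the specific bi-Lipschitz map you propose does not work, and your diagnosis of where the monotonicity of $\psi(t)/t$ is used is off.

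The map $\Phi(t,x)=\bigl(\phi(t),\,\tfrac{\tilde\psi(\phi(t))}{\psi(t)}\,x\bigr)$ requires the radial factor $\tilde\psi(\phi(t))/\psi(t)$ to vary in a Lipschitz way in $t$. But a cuspidal function $\psi$ is merely left-continuous and increasing: it may have jump discontinuities, and even where continuous it need not have any control on its (one-sided) derivative. At a jump of $\psi$ your $\Phi$ is discontinuous; even in the continuous case the ``bounded logarithmic derivative'' you invoke is simply not available from the hypotheses. So this construction cannot give a bi-Lipschitz map for general $\psi$, and the monotonicity of $\psi(t)/t$ does not rescue it.

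The paper avoids this obstacle by never touching the $x$-coordinate in the cusp region. Its map on the cuspidal part is the shear
\[
\mathcal O(t,x)=\Bigl(\tfrac{t+|x|}{1+\psi(1)},\,x\Bigr),
\]
which is manifestly bi-Lipschitz \emph{independently of any regularity of $\psi$}. The associated Lipschitz cuspidal function $\hat\psi$ is then read off from where the boundary lands, via the implicit relation $t_{\hat t}+r_{\hat t}=(1+\psi(1))\hat t$ with $\psi(t_{\hat t})\le r_{\hat t}\le\lim_{s\to t_{\hat t}^+}\psi(s)$. In particular, the bi-Lipschitz straightening exists for \emph{every} cuspidal function; the monotonicity of $\psi(t)/t$ is used only afterwards, to check that $\hat\psi(\hat t)/\hat t$ is again nondecreasing so that the Maz'ya--Poborchi hypotheses hold on the Lipschitz side. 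Your plan for transporting the integrability conditions and the doubling condition, and then invoking \cite{Mazya} in both directions, is correct once the right map is in hand.
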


Given $1<s<\fz$, we refer to the domain $\boz_\psi^n$ with $\psi(t)=t^s$ by $\boz_{t^s}^n$. Let $n\geq 3$. Theorem \ref{prop:mazya} yields that $\boz^n_{t^s}$ is a Sobolev $(p, q)$-extension domain, whenever $1\leq q<n-1$ and $(n-1)q/(n-1-q)\leq p\leq\fz$. Our bi-Lipschitz transformation method allows us to extend this result from the case of $t^s$ to arbitrary cuspidal functions. This result can be regarded as a limit case of Theorem \ref{prop:mazya} and also of theorems by Maz'ya and Poborchi in \cite{Mazya}.
\begin{theorem}\label{thm:main}
Let $3\leq n<\fz$ and $\psi:(0, 1]\to(0, \fz)$ be a cuspidal function. Then the corresponding outward cuspidal domain $\boz^n_\psi$ is a Sobolev $(p, q)$-extension domain, whenever $1\leq q<n-1$ and $(n-1)q/(n-1-q)\leq p\leq\fz$. 
\end{theorem}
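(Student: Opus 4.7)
By the main theorem announced in the abstract, there exists a global bi-Lipschitz map $F\colon\R^n\to\R^n$ sending $\boz_\psi^n$ onto some Lipschitz outward cuspidal domain $\boz_\phi^n$. A short change of variables argument shows that Sobolev $(p,q)$-extension operators transport under such global bi-Lipschitz maps with control of their norms, so it suffices to prove the theorem under the additional assumption that $\psi$ is itself Lipschitz.

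Assume therefore that $\psi$ is Lipschitz. I would build the extension $E\colon W^{1,p}(\boz_\psi^n)\to W^{1,q}(\R^n)$ by gluing two pieces via a smooth partition of unity. Away from the tip at the origin and from the top cap, $\partial\boz_\psi^n$ is locally a Lipschitz graph and the Calder\'on-Stein reflection yields a standard local extension. In a neighborhood of the tip, for a point $(t,x)$ with $|x|>\psi(t)$ I take $Eu(t,x)=u(t,\psi(t)x/|x|)$ (radial reflection onto the cusp wall), or an analogous cross-section-average construction as in Maz'ya-Poborchi. The Lipschitz bound $|\psi'|\le L$ provides exactly the control needed on $\partial_t(Eu)$, which then gets estimated in terms of $|\partial_t u|+L|\nabla_x u|$ evaluated on the wall.

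To estimate $\|Eu\|_{W^{1,q}(\R^n)}$ I integrate $|\nabla Eu|^q$ over the complement; the delicate region is the tip. Writing a point there as $(t,r\omega)$ with $r=|x|$ and $\omega\in S^{n-2}$, one has $|\nabla Eu(t,r\omega)|\lesssim(\psi(t)/r)|\nabla_\tau u(t,\psi(t)\omega)|+|\partial_t u(t,\psi(t)\omega)|+|\psi'(t)|\,|\nabla u(t,\psi(t)\omega)|$. Integrating first in $r$ from $\psi(t)$ to a fixed radius produces the factor $\psi(t)^q\int_{\psi(t)}^{R}r^{n-2-q}\,dr$, which is finite precisely when $q<n-1$. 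The remaining $(n-1)$-dimensional cross-section integrals are controlled by Sobolev embedding on the ball $B_{\psi(t)}^{n-1}$, whose Sobolev conjugate is $(n-1)p/(n-1-p)$; matching the conjugate exponents through Fubini in $t$ forces exactly the condition $p\ge (n-1)q/(n-1-q)$. The case $p=\fz$ is separate: by quasi-convexity of $\boz_\psi^n$ and \cite{hkt} every $W^{1,\fz}$ function on $\boz_\psi^n$ extends to a Lipschitz function on $\R^n$, and multiplying by a compactly supported cutoff gives a $W^{1,q}(\R^n)$ extension for any finite $q$.

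The main obstacle in this scheme is the bi-Lipschitz reduction itself, which is the paper's central contribution and is assumed here. Once available, the remaining work is a careful implementation of the Maz'ya-Poborchi reflection framework; the sharp exponent $(n-1)q/(n-1-q)$ arises as the natural meeting point of the radial integrability condition $q<n-1$ and the $(n-1)$-dimensional cross-section Sobolev embedding, and the technical delicacy is in bookkeeping the trace and Sobolev applications so that they close at the stated borderline.
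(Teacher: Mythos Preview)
Your reduction to the Lipschitz case via the global bi-Lipschitz map is exactly what the paper does, and the handling of $p=\infty$ is fine. The gap is in your explicit extension near the tip. The map $Eu(t,x)=u(t,\psi(t)x/|x|)$ sends every exterior point $(t,r\omega)$ with $r>\psi(t)$ to the \emph{boundary} point $(t,\psi(t)\omega)$; consequently the quantities appearing in your gradient estimate, such as $\int_{S^{n-2}}|\nabla u(t,\psi(t)\omega)|^q\,d\omega$, are surface integrals of $|\nabla u|$ over $\partial\Omega_\psi^n$. For $u\in W^{1,p}$ the gradient $\nabla u$ is only in $L^p$ and has no trace on a hypersurface, so these integrals cannot be controlled by $\|u\|_{W^{1,p}(\Omega_\psi^n)}$. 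The ``Sobolev embedding on the ball $B_{\psi(t)}^{n-1}$'' you invoke does not help: it would bound $\|u\|_{L^{p^*}}$ on the interior of the ball, not $\|\nabla u\|_{L^q}$ on its boundary sphere, and there is no substitute trace inequality at this regularity. A secondary issue is that your extension is not cut off anywhere, so it is unclear how it becomes a global $W^{1,q}(\rn)$ function.

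The paper avoids both problems by reflecting into the \emph{interior}: on the annulus $A_\psi^n=\{\psi(t)<|x|<2\psi(t)\}$ one sets $\mathcal R(t,x)=\bigl(t,(\tfrac{3}{2}\psi(t)-\tfrac{|x|}{2})\,x/|x|\bigr)$, which lands in $\{\tfrac12\psi(t)<|x|<\psi(t)\}\subset\Omega_\psi^n$, and multiplies by the cut-off $L(t,x)=2-|x|/\psi(t)$ so that the extension vanishes on $|x|=2\psi(t)$ and can be set to zero outside the doubled cusp. The only delicate term is $\nabla L\cdot(u\circ\mathcal R)$, and for this a single application of H\"older gives
\[
\|\nabla L\cdot(u\circ\mathcal R)\|_{L^q(A_\psi^n)}\le \|\nabla L\|_{L^{pq/(p-q)}(A_\psi^n)}\,\|u\circ\mathcal R\|_{L^p(A_\psi^n)}.
\]
Since $|\nabla L|\le C/\psi(t)$, one has $\int_{A_\psi^n}|\nabla L|^{pq/(p-q)}\le C\int_0^1\psi(t)^{\,n-1-pq/(p-q)}\,dt$, and the Lipschitz bound $\psi(t)\le Ct$ makes this finite precisely when $pq/(p-q)\le n-1$, i.e.\ $p\ge (n-1)q/(n-1-q)$. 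No cross-sectional Sobolev embedding or trace theory enters; the sharp exponent comes from this H\"older split alone.
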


The sharpness part of Theorem \ref{prop:mazya} also yields the sharpness of Theorem \ref{thm:main}.
\begin{proposition}\label{proposition}
For arbitrary $n-1\leq p<\fz$, there exists $1<s_1<\fz$ such that $\boz_{t^s}^n$ is not a Sobolev $(p, n-1)$-extension domain when $s>s_1$. For arbitrary $1\leq q<n-1$ and $q\leq p<(n-1)q/(n-1-q)$, there exists $1<s_2<\fz$ such that $\boz_{t^s}^n$ is not a Sobolev $(p, q)$-extension domain when $s>s_2$.
\end{proposition}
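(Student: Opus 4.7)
The plan is to invoke the converse parts of Theorem~\ref{prop:mazya}, applied to the power cuspidal function $\psi(t) = t^s$ with $s>1$. Such a $\psi$ is admissible: $\psi(t)/t = t^{s-1}$ is nondecreasing on $(0,1]$ with limit zero, and $\psi(2t) = 2^s\psi(t)$ gives the doubling condition \eqref{eq:doubling}. First I would substitute $\psi(t)=t^s$ into \eqref{eq:inc1} to obtain the integrand $t^{n(s'-s)/(s'-1)-1}$, whose integral over $(0,1]$ is finite iff $s'>s$; the logarithmic correction in \eqref{eq:inc2} is too weak to change this threshold, so both integrability conditions reduce to $s'>s$.

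Next I would rearrange each of the three $(p,q)$-ranges for the extension operators $E_1$, $E_2$, $E_3$ in Theorem~\ref{prop:mazya} as an upper bound on the parameter $s'$. The inequality $q\le np/(1+(n-1)s')$ in the $E_1$ range becomes $s'\le (np-q)/((n-1)q)$; analogous algebra yields a bound $s'_B(p,q)$ from the $E_2$ range whose denominator $pq-(p-q)(n-1)$ is positive precisely when $p<(n-1)q/(n-1-q)$; the $E_3$ range yields $s'\le (np-(n-1))/(n-1)^2$. Writing $s^\ast(p,q)$ for the maximum of these bounds, the converse of Theorem~\ref{prop:mazya} gives: if $s>s^\ast(p,q)$, then every $s'$ that places $(p,q)$ in any of the three ranges satisfies $s'\le s^\ast(p,q)<s$, so integrability fails and $\boz_{t^s}^n$ cannot be a $(p,q)$-extension domain.

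For Part~(1) with $p>n-1$ and $q=n-1$, a short computation shows that all three bounds coincide as $(np-n+1)/(n-1)^2$, which is strictly larger than $1$, producing $s_1$. For Part~(2) with $q<p<(n-1)q/(n-1-q)$, the bound coming from $E_1$ already exceeds $1$, producing $s_2$. The endpoint cases $p=n-1$ in Part~(1) and $p=q$ in Part~(2) make $s^\ast(p,q)$ collapse to $1$ and must be handled by monotonicity: on the bounded domain $\boz_\psi^n$, the continuous inclusion $W^{1,p'}(\boz_\psi^n)\hookrightarrow W^{1,p}(\boz_\psi^n)$ for $p'>p$ shows that a $(p,q)$-extension would induce a $(p',q)$-extension; choosing $p'$ slightly above $p$ so that the previous step applies yields failure for every $s$ beyond some $s_1>1$, respectively $s_2>1$.

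The main obstacle will be the algebraic rearrangement for the $E_2$ range: solving for $s'$ in the defining $q$-inequality requires the coefficient $pq-(p-q)(n-1)$ to be positive, and this condition coincides exactly with the hypothesis $p<(n-1)q/(n-1-q)$ of Theorem~\ref{thm:main}. This alignment between the algebraic and geometric thresholds is precisely what makes Proposition~\ref{proposition} a sharp complement to that theorem.
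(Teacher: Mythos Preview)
Your approach is essentially the paper's: both arguments simply invoke the converse parts of Theorem~\ref{prop:mazya} with $\psi(t)=t^s$, and the paper records the explicit thresholds $s_1=(np-(n-1))/(n-1)^2$ and $s_2=(pq+p-q)/(pq+(n-1)(q-p))$ coming from the $E_3$ and $E_2$ ranges respectively. Your treatment is in fact more careful than the paper's at the endpoints $p=n-1$ and $p=q$, where these formulas collapse to $1$ and your monotonicity step (a $(p,q)$-extension yields a $(p',q)$-extension for $p'>p$ on a bounded domain) cleanly recovers a threshold strictly above $1$.
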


 
The paper is organized as follows. Section $2$ contains definitions and preliminary results. Section $3$ contains proofs of all results presented above. Section $4$ contains some further discussion and a conjecture.
\section{Definitions and Preliminaries}
In this note, $\boz\subset\R^n$ is always a bounded domain. $C$ will refer to constants that depend on various parameters and may differ even in a chian of inequalities. 
The Euclidean distance between points $x,y\in\rn$ is denoted by $|x-y|$. The open $n$-dimensional ball of radius $r$ centered at the point $x$ is denoted by $B^{n}(x,r)$. 

 Let us give the definition of the Sobolev space $W^{1,p}(\boz)$.
\begin{defn}\label{Sobolev}
We define the first order Sobolev space $W^{1,p}(\boz)$, $1\leq p\leq \fz$, as the set 
\begin{equation}
\left\{\, u\in\lp \setcolon \nabla u\in L^p(\boz\setcolon\rn)\, \right\}\, .\nonumber
\end{equation}
Here $\nabla u=\left(\frac{\partial u}{\partial x_1}\, ,\, \dots \, ,\,\frac{\partial u}{\partial x_n}\right)$ is the weak (or distributional) gradient of the integrable function $u$. 
\end{defn}
The Sobolev space $W^{1,p}(\boz)$ is equipped with the norm:
\begin{equation}
\|u\|_{\wp}=\|u\|_{L^p(\boz)}+\||\nabla u|\|_{L^p(\boz)}\nonumber
\end{equation}
for $1\leq p\leq\fz$, where $\|f\|_{L^p(\boz)}$ denotes the usual $L^p$-norm for $p \in [1,\infty]$. Let us give the definition of Sobolev extension domains.
\begin{defn}\label{de:extdo}
Let $1\leq q\leq p\leq\fz$. A bounded domain $\boz\subset\rn$ is said to be a Sobolev $(p, q)$-extension domain, if there is a bounded extension operator $E$ from $W^{1,p}(\boz)$ to $W^{1,q}(\rn)$ such that, for every $u\in W^{1,p}(\boz)$, there exists a function $E(u)\in W^{1,q}(\rn)$ with $E(u)\big|_\boz\equiv u$ and 
\[\|E(u)\|_{W^{1,q}(\rn)}\leq C\|u\|_{W^{1,p}(\boz)}\]
for a positive constant $C$ independent of $u$.
\end{defn}

An outward cuspidal domain $\boz_\psi^n$ has a singular point on the boundary. However, it still has some nice geometric properties. For example, it satisfies the following $segment\ condition$.
\begin{defn}\label{defn:segment}
We say that a domain $\boz\subset\rn$ satisfies the segment condition if every $x\in\partial\boz$ has a neighborhood $U_x$ and a nonzero vector $y_x$ such that if $z\in\overline\boz\cap U_x$, then $z+ty_x\in\boz$ for $0<t<1$.
\end{defn}
The following lemma tells us that Sobolev functions on a domain with the segment condition can be approximated by globally smooth functions. See \cite[Theorem 3.22]{adams}.
\begin{lemma}\label{lem:density}
If a domain $\boz\subset\rn$ satisfies the segment condition, then the set of restrictions to $\boz$ of functions in $C_o^\fz(\rn)$ is dense in $W^{1,p}(\boz)$ for $1\leq p<\fz$. In short, $C_o^\fz(\rn)\cap W^{1,p}(\boz)$ is dense in $W^{1,p}(\boz)$ for $1\leq p<\fz$.
\end{lemma}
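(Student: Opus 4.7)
The plan is to approximate an arbitrary $u\in\wp$ in the $\wp$-norm by functions in $C_o^\fz(\rn)\cap\wp$. The classical obstacle is that naive mollification of $u$ is not defined near $\partial\boz$, so each near-boundary piece of $u$ must first be shifted strictly into $\boz$ using the vectors furnished by the segment condition, and only then mollified.

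First I would localize. Since $\boz$ is bounded, cover the compact set $\partial\boz$ by finitely many neighborhoods $U_1,\ldots,U_N$ as in Definition \ref{defn:segment}, with associated nonzero vectors $y_1,\ldots,y_N$. Add an open set $U_0$ with $\overline{U_0}\subset\boz$ so that $\{U_j\}_{j=0}^{N}$ covers $\overline\boz$, and choose a smooth partition of unity $\{\eta_j\}$ subordinate to this cover. It then suffices to approximate each $u_j:=\eta_j u\in\wp$ individually in $\wp$ by $C_o^\fz(\rn)$-functions and sum. The interior piece $u_0$ has compact support in $\boz$; extending by zero and mollifying with a kernel of radius $\varepsilon<\dist(\mathrm{supp}\,\eta_0,\partial\boz)$ yields a $C_o^\fz(\rn)$-approximation converging to $u_0$ in $W^{1,p}(\rn)$, and hence in $\wp$.

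Next I would handle each boundary piece $u_j$, $j\geq 1$, via translation. After extending $u_j$ by zero outside $\boz$, set $u_{j,t}(x):=u_j(x-ty_j)$ for small $t>0$. The segment condition guarantees that every point $z\in\overline\boz\cap U_j$ satisfies $z+ty_j\in\boz$ for $0<t<1$, so after shrinking $U_j$ slightly if necessary, the effective support of $u_{j,t}|_\boz$ lies at a positive distance $d_t>0$ from $\partial\boz$. Mollifying $u_{j,t}$ with radius $\varepsilon<d_t$ then produces $\phi_{j,t,\varepsilon}\in C_o^\fz(\rn)$. Convergence follows from two standard facts: continuity of translations in $L^p(\rn)$, applied to the zero extensions of $u_j$ and of its weak partial derivatives, gives $u_{j,t}\to u_j$ in $\wp$ as $t\to 0^+$ (here the assumption $1\leq p<\fz$ is essential), while the usual mollifier estimates give $\phi_{j,t,\varepsilon}\to u_{j,t}$ in $\wp$ as $\varepsilon\to 0^+$ for each fixed $t$. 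A diagonal choice $\varepsilon_t\to 0$ then yields $\phi_{j,t}\in C_o^\fz(\rn)$ with $\phi_{j,t}\to u_j$ in $\wp$, and summing over $j$ finishes the proof.

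The main obstacle is the translation step: one must verify that $x\mapsto x-ty_j$ really pushes the effective support of $u_j$ uniformly into the interior of $\boz$ over a neighborhood of $\overline\boz\cap U_j$, and that the translated function (originally defined only on a translate of $\boz$) is in $W^{1,p}$ on a neighborhood of $\overline\boz$. This is precisely where the segment condition is used, and is the sole reason the lemma requires a boundary regularity hypothesis.
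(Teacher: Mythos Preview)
The paper does not prove this lemma; it simply cites \cite[Theorem 3.22]{adams}, whose argument is exactly the localize--translate--mollify scheme you describe. So your strategy matches the standard one, but the translation step contains a genuine error.

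You set $u_{j,t}(x)=u_j(x-ty_j)$ and identify the key point as pushing the \emph{support} of $u_{j,t}$ into $\boz$. This is the wrong direction and the wrong objective. The zero extension of $u_j$ lies in $W^{1,p}$ only away from the set $\partial\boz\cap\mathrm{supp}\,\eta_j$; your shift carries this singular set to $(\partial\boz\cap\mathrm{supp}\,\eta_j)+ty_j$, which by the segment condition lies \emph{inside} $\boz$. Hence $u_{j,t}$ is in general not in $\wp$ at all (whenever $u_j$ has nonzero trace on $\partial\boz$, your $u_{j,t}$ jumps across a hypersurface interior to $\boz$), so neither the claim $u_{j,t}\to u_j$ in $\wp$ nor $\phi_{j,t,\varepsilon}\to u_{j,t}$ in $\wp$ is meaningful. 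The correct translation is $u_{j,t}(x)=u_j(x+ty_j)$: then the singular set becomes $(\partial\boz\cap\mathrm{supp}\,\eta_j)-ty_j$, and the segment condition forces this to miss $\overline\boz$ (if $z\in\partial\boz\cap U_j$ and $z-ty_j\in\overline\boz\cap U_j$, then $(z-ty_j)+ty_j=z\in\boz$, a contradiction). Thus $u_{j,t}\in W^{1,p}(V)$ for some open $V\supset\overline\boz$, and now mollifier convergence and translation continuity both work on $\boz$ exactly as you intended.
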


\section{Proofs of Theorem \ref{prop:mazya}, Theorem \ref{thm:main} and Proposition \ref{proposition}}
As mentioned in the introduction, every outward cuspidal domain $\boz_\psi^n$ is a Sobolev $(\fz, \fz)$-extension domain. We show that every outward cuspidal domain is globally bi-Lipschitz equivalent to a Lipschitz outward cuspidal domain. For the proof of this result, we first introduce a Lipschitz cuspidal function $\hat\psi$ generated by a given cuspidal function $\psi$.
\begin{lemma}\label{le:LipcF}
Let $\psi:(0, 1]\to(0, \fz)$ be an arbitrary cuspidal function. Then the following claims hold.\\
$(1):$ For every $0<\hat t<1$, there exists a unique pair $(t_{\hat t}, r_{\hat t})$ with $0<t_{\hat t}<1$, $\psi(t_{\hat t})\leq r_{\hat t}\leq\lim_{s\to t_{\hat t}^+}\psi(s)$ and $t_{\hat t}+r_{\hat t}=(1+\psi(1))\hat t$.\\
$(2):$ The function $\hat\psi:(0, 1]\to(0, \fz)$ defined by setting 
\[\hat\psi(\hat t)=r_{\hat t}\ {\rm for\ every}\ \hat t\in(0, 1]\]
is a Lipschitz cuspidal function.
\end{lemma}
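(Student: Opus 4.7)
The plan is to parameterize the ``completed graph'' of $\psi$---the graph together with vertical fill-in segments at each jump---by the diagonal coordinate $t+\psi(t)$. Set $f(t):=t+\psi(t)$ on $(0,1]$; since $\psi$ is increasing and left-continuous, $f$ is strictly increasing and left-continuous, and its image lies in $(0,1+\psi(1)]$ with the only ``gaps'' being intervals of the form $(f(t_0),\,t_0+\psi(t_0^+)]$ at jump points $t_0$ of $\psi$, where $\psi(t_0^+):=\lim_{s\to t_0^+}\psi(s)$. For each $\hat t\in(0,1]$ I would set $y:=(1+\psi(1))\hat t$ and define
\[t_{\hat t}:=\sup\{t\in(0,1]\setcolon f(t)\leq y\},\qquad r_{\hat t}:=y-t_{\hat t}.\]

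To verify part $(1)$, I would first invoke left-continuity of $f$ to show that the supremum is attained, so $f(t_{\hat t})\leq y$, yielding $\psi(t_{\hat t})\leq r_{\hat t}$. For the upper bound, any $s>t_{\hat t}$ satisfies $f(s)>y$, i.e.\ $\psi(s)>y-s$; letting $s\to t_{\hat t}^+$ gives $\psi(t_{\hat t}^+)\geq r_{\hat t}$. For uniqueness, if $(t_1,r_1)$ and $(t_2,r_2)$ were two admissible pairs with $t_1<t_2$, then monotonicity of $\psi$ forces $\psi(t_1^+)\leq\psi(t_2)$, whence $r_1\leq\psi(t_1^+)\leq\psi(t_2)\leq r_2$; combined with $t_1<t_2$ this contradicts $t_1+r_1=t_2+r_2$.

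For part $(2)$, the same dichotomy yields monotonicity of $\hat\psi$: for $\hat t_1<\hat t_2$, either $t_{\hat t_1}<t_{\hat t_2}$ and then $r_{\hat t_1}\leq\psi(t_{\hat t_1}^+)\leq\psi(t_{\hat t_2})\leq r_{\hat t_2}$, or $t_{\hat t_1}=t_{\hat t_2}$ and then $r_{\hat t_1}<r_{\hat t_2}$ follows directly from $y_1<y_2$. For Lipschitz continuity, I would use
\[\hat\psi(\hat t_2)-\hat\psi(\hat t_1)=(1+\psi(1))(\hat t_2-\hat t_1)-(t_{\hat t_2}-t_{\hat t_1}),\]
whose right-hand side lies in $[0,\,(1+\psi(1))(\hat t_2-\hat t_1)]$ since $t_{\hat t}$ is non-decreasing in $\hat t$, producing Lipschitz constant at most $1+\psi(1)$; left-continuity of $\hat\psi$ is then automatic. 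The main obstacle is conceptual rather than computational: one must see that the jumps of $\psi$ correspond precisely to the constancy intervals of $\hat t\mapsto t_{\hat t}$, and this matching is exactly what smooths the cuspidal function $\psi$ into the Lipschitz function $\hat\psi$.
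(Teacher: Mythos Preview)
Your argument is correct. For part~(1) your construction via $f(t)=t+\psi(t)$ and $t_{\hat t}=\sup\{t:f(t)\le y\}$ is essentially the analytic version of the paper's geometric argument, which phrases the same monotonicity in terms of the function $T(t,x)=t+|x|$ on $\partial\boz_\psi^n$; the content is the same.

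Where your approach genuinely diverges is in part~(2). The paper proves Lipschitz continuity by a three-case analysis, splitting $(0,1]$ into $A=\{\hat t:r_{\hat t}\ge t_{\hat t}\}$ and its complement, using the comparabilities $\hat t\sim r_{\hat t}$ on $A$ and $\hat t\sim t_{\hat t}$ off $A$, and then patching the two via an intermediate point with $t_{\hat s}=r_{\hat s}$. Your route is shorter and sharper: from the identity
\[
\hat\psi(\hat t_2)-\hat\psi(\hat t_1)=(1+\psi(1))(\hat t_2-\hat t_1)-(t_{\hat t_2}-t_{\hat t_1})
\]
together with the monotonicity of $\hat t\mapsto t_{\hat t}$ and of $\hat\psi$ itself, you read off directly that $\hat\psi$ is Lipschitz with the explicit constant $1+\psi(1)$. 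This bypasses the case split entirely and yields a quantitative bound the paper does not state. The paper's decomposition, on the other hand, foreshadows the set $B=\{\hat t:t_{\hat t}\ge r_{\hat t}\}$ used later in the proof of Theorem~\ref{prop:mazya}, so its case analysis is not wasted effort in the larger context.
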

\begin{figure}[h!tbp]
\centering
\includegraphics[width=0.4\textwidth]
{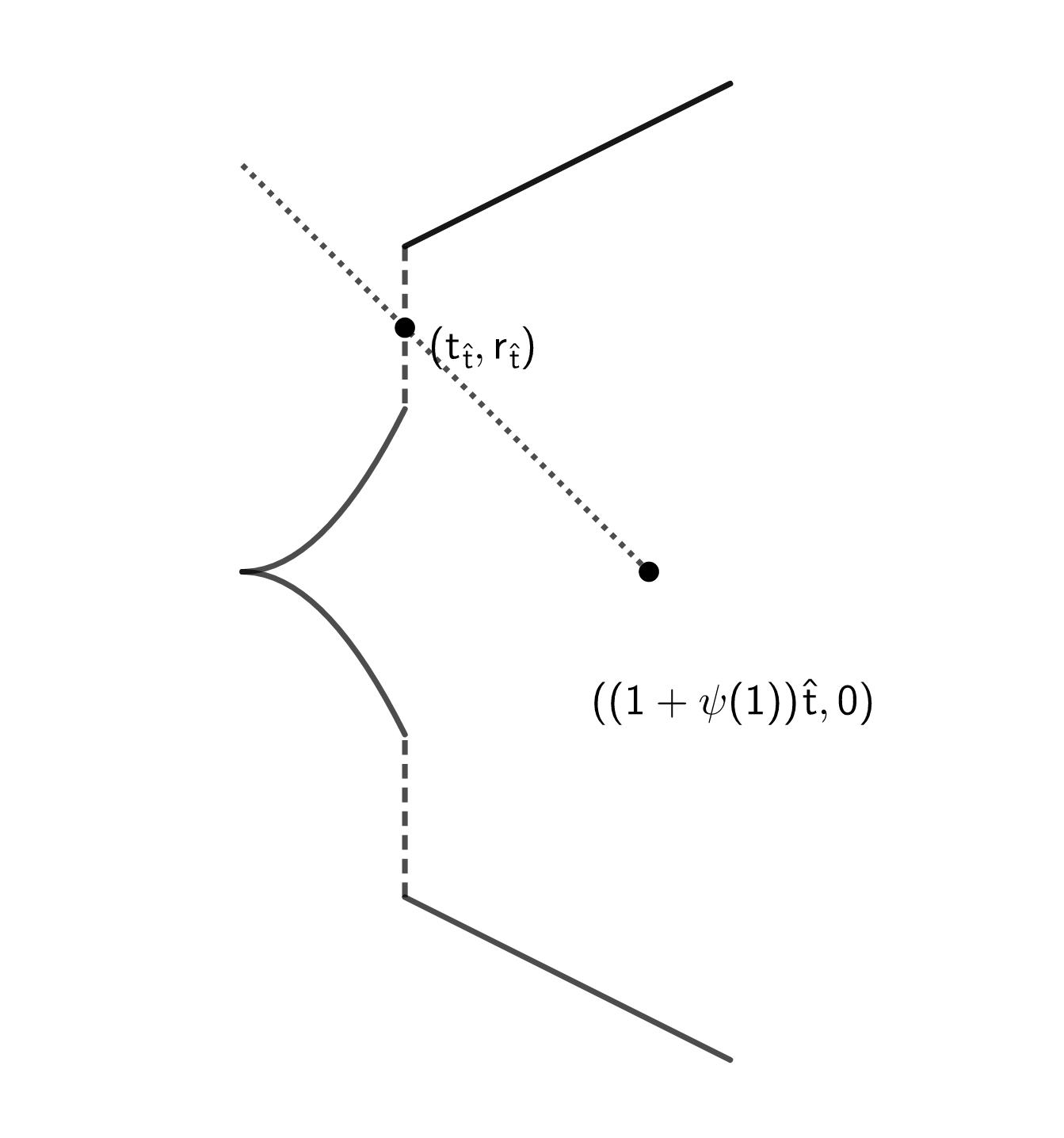}\label{fig:Lip}
\caption{Bi-Lipschitz transformations}
\end{figure}
\begin{proof}
Let $(t, r)\in(0, 1]\times(0, \psi(1))$ be a pair of positive numbers such that for every $x\in\rr^{n-1}$ with $|x|=r$, we have $(t, x)\in\partial\boz^n_\psi$. Define a function $T$ on $\partial\boz^n_\psi$ by setting
\[T(t, x):=t+|x|\]
for every $(t, x)\in\partial\boz^n_\psi$. Since $\psi$ is left-continuous and increasing on $(0, 1]$, we always have $T(t_1,x_1)<T(t_2,x_2)$ for every $(t_1, x_1), (t_2, x_2)\in\partial\boz^n_\psi$ with $0<t_1<t_2<1$. By the same reason, we have  $T(t, x_1)<T(t, x_2)$ for every $(t, x_1), (t, x_2)\in\partial\boz^n_\psi$ with $0<t<1$ and $$\psi(t)\leq|x_1|<|x_2|\leq\lim_{s\to t^+}\psi(s).$$ Hence, for every $0<\hat t<1$, there exists a unique pair $(t_{\hat t}, r_{\hat t})$ with $0<t_{\hat t}<1$, $\psi(t_{\hat t})\leq r_{\hat t}\leq\lim_{s\to t_{\hat t}^+}\psi(s)$ and $t_{\hat t}+r_{\hat t}=(1+\psi(1))\hat t$. Now, let us show $\hat\psi$ is a Lipschitz cuspidal function. By the definition, it is easy to see $\hat\psi$ is increasing. Hence, it suffices to show that it is Lipschitz. Define a measurable subset $A\subset(0, 1]$ by setting
\[A:=\{\hat t\in(0, 1]: r_{\hat t}\geq t_{\hat t}\}.\] 
Let $\hat t_1, \hat t_2\in(0, 1]$ be arbitrary. According to their locations, we divide the following argument into three cases. First, let us assume $\hat t_1, \hat t_2\in A$. Since $\hat t\sim r_{\hat t}$ for every $\hat t\in A$, by the definition, we have 
\begin{equation}\label{eq:In1}
|\hat\psi(\hat t_1)-\hat\psi(\hat t_2)|=|r_{\hat t_1}-r_{\hat t_2}|\leq C|\hat t_1-\hat t_2|.
\end{equation}
Next, we assume $\hat t_1, \hat t_2\in (0, 1]\setminus A$. For every $\hat t\in(0, 1]\setminus A$, we have $\hat t\sim t_{\hat t}$. Hence, the triangle inequality implies
\begin{equation}\label{eq:In2}
|\hat\psi(\hat t_1)-\hat\psi(\hat t_2)|=|r_{\hat t_1}-r_{\hat t_2}|\leq C|\hat t_1-\hat t_2|+C|t_{\hat t_1}-t_{\hat t_2}|\leq C|\hat t_1-\hat t_2|.
\end{equation}
Finally, we assume $\hat t_1\in A$ and $\hat t_2\in (0, 1]\setminus A$. Since both $t_{\hat t}$ and $r_{\hat t}$ are continuous with respect to $\hat t$, there exists $\hat s\in(0, 1]$ between $\hat t_1$ and $\hat t_2$ with $t_{\hat s}=r_{\hat s}$. By (\ref{eq:In1}) and (\ref{eq:In2}), the triangle inequality implies
\begin{equation}\label{eq:In3}
|\hat\psi(\hat t_1)-\hat\psi(\hat t_2)|\leq|\hat\psi(\hat t_1)-\hat\psi(\hat s)|+|\hat\psi(\hat s)-\hat\psi(\hat t_2)|\leq C|\hat t_1-\hat s|+C|\hat s-\hat t_2|\leq C|\hat t_1-\hat t_2|.
\end{equation}
By combining (\ref{eq:In1}), (\ref{eq:In2}) and (\ref{eq:In3}), we conclude that $\hat\psi$ is Lipschitz.
\end{proof}
We are ready to prove the following proposition which claims that an arbitrary outward cuspidal domain is globally bi-Lipschitz equivalent to a Lipschitz outward cuspidal domain.
\begin{proposition}\label{prop:bilip}
For a given cuspidal function $\psi$, let $\hat\psi$ be the Lipschitz cuspidal function defined in Lemma \ref{le:LipcF}. Then there exists a global bi-Lipschitz transformation $\mathcal O:\rn\to\rn$ with $\mathcal O(\boz^n_\psi)=\boz^n_{\hat\psi}$.
\end{proposition}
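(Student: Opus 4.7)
My plan is to construct $\mathcal O$ explicitly using the boundary correspondence from Lemma~\ref{le:LipcF}, first reducing to a planar problem by rotational symmetry. Since both $\boz_\psi^n$ and $\boz_{\hat\psi}^n$ are bodies of revolution about the $t$-axis, it suffices to construct a bi-Lipschitz planar map $F$ between the $2$-dimensional generating half-planes $R = \{(t,y)\in\rr^2 \colon y\geq 0,\ (t,ye,0,\ldots,0)\in\boz_\psi^n\}$ (for a fixed unit vector $e\in\rr^{n-1}$) and $\hat R$, preserving the $t$-axis $\{y=0\}$ setwise. The global map $\mathcal O$ is then obtained by radial rotation: for $x=r\omega$ with $\omega\in S^{n-2}$, set $\mathcal O(t,x)=(F_1(t,r),F_2(t,r)\omega)$; this inherits bi-Lipschitzness of $F$ and extends continuously to $x=0$ because $F$ fixes $\{y=0\}$.

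The boundary relation $t_{\hat t}+r_{\hat t}=(1+\psi(1))\hat t$ of Lemma~\ref{le:LipcF} motivates the candidate
\[
F(t,y)=\left(\frac{t+y}{1+\psi(1)},\ y\right).
\]
This affine map is globally bi-Lipschitz on $\rr^2$ --- its Jacobian is upper triangular with constant nonzero diagonal $(1/(1+\psi(1)),1)$ --- and by construction sends the full cuspidal boundary of $R$ (the graph of $\psi$ together with every vertical jump sheet $\{(t_0,y)\colon \psi(t_0)\leq y\leq \lim_{s\to t_0^+}\psi(s)\}$) bijectively onto the graph of $\hat\psi$. That $F$ carries the cuspidal interior of $R$ into the interior of $\hat R$ reduces, via the lemma, to a short monotonicity contradiction: if $y<\psi(t)$ but $y\geq r_{\hat t}$ at $\hat t=(t+y)/(1+\psi(1))$, then $t_{\hat t}\geq t$ forces $\psi(t_{\hat t})\geq\psi(t)>y\geq r_{\hat t}$, contradicting $\psi(t_{\hat t})\leq r_{\hat t}$. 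The reversed inequality yields surjectivity onto the cuspidal part of $\hat R$.

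The principal obstacle is the non-cuspidal outer region $\{t\in[1,2)\}$: under the affine $F$ the outer rectangle of $R$ is sheared into a parallelogram which need not exhaust the outer rectangle of $\hat R$, so $F(\boz_\psi^n)$ is in general a proper subset of $\boz_{\hat\psi}^n$. To remedy this I would post-compose $F$ with a bi-Lipschitz correction $G\colon\rr^2\to\rr^2$ that is the identity on the graph of $\hat\psi$ (preserving the cuspidal correspondence already arranged) and, on each horizontal cross-section $\{\hat y=\text{const}\}$, acts affinely in the first coordinate so as to stretch the finite right endpoint of the $F(R)$ cross-section up to $\hat t=2$. The main technical step is to verify that the stretch factors of $G$ remain uniformly bounded in $\hat y$, which follows from comparing the generalized inverses of $\hat\psi$ and $\psi$ via Lemma~\ref{le:LipcF}; the bi-Lipschitz constants of $G\circ F$ will then depend only on $\psi(1)$ and the Lipschitz constant of $\hat\psi$. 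Once this is established, the radial extension to $\rr^n$ is standard.
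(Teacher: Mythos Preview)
Your affine shear $F(t,y)=\bigl((t+y)/(1+\psi(1)),\,y\bigr)$ is precisely the map the paper uses on its region $U_1=\{|x|\le 1+\psi(1)-t\}$, and your rotational reduction matches the paper's construction, which has the radial form $(t,x)\mapsto(\Phi(t,|x|),x)$ throughout. So the core idea is the same.

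The gap is in your correction $G$. You specify it only on horizontal slices of $F(R)$ with $\hat y<\psi(1)$; a global bi-Lipschitz $G\colon\rr^2\to\rr^2$ also needs a prescription to the left of the graph of $\hat\psi$, to the right of the slanted edge $\hat t=(2+\hat y)/(1+\psi(1))$, and for $\hat y\ge\psi(1)$. These do not glue for free: as $\hat y\to\psi(1)^-$ your stretch factor tends to $1+\psi(1)\neq 1$, so setting $G=\mathrm{id}$ above that height produces a discontinuity, while extending the affine-line formula with $\hat\psi^{-1}(\hat y)$ replaced by its limiting value $1$ lets the stretch factor decay to $0$ as $\hat y\to\infty$. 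The paper sidesteps this by writing $\mathcal O$ directly as four explicit pieces --- the shear $F$ on the cone $U_1$; the rational stretch $\bigl((t+2(|x|-\psi(1)))/(1+|x|-\psi(1)),\,x\bigr)$ on $U_2=\boz_\psi^n\setminus U_1$; the affine shear $(t+|x|-\psi(1),x)$ on the exterior $U_3=\{|x|\ge\max(\psi(1),\,1+\psi(1)-t)\}$; and the identity on $U_4=\{t\ge 2,\,|x|\le\psi(1)\}$ --- and then checks agreement on the interfaces and two-sided bounds on $|D\mathcal O|$. The design choice that makes this clean is that the pivot between the cuspidal and rectangular regimes is the flat hyperplane $\hat t=1$ (the image of $\partial U_1$) rather than the curved graph of $\hat\psi$; consequently the exterior piece is a single $\hat y$-independent shear and all gluing is along straight faces. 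Your approach can be salvaged, but you will find it considerably simpler to pivot the horizontal stretch about $\hat t=1$ instead of about $\hat\psi^{-1}(\hat y)$.
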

\begin{proof}
Let $\psi:(0, 1]\to(0, \fz)$ be a cuspidal function. Let $\hat\psi:(0, 1]\to(0, \fz)$ be the corresponding Lipschitz cuspidal function defined in Lemma \ref{le:LipcF}. We extend the definition of $\psi$ to the entire real line $\rr$ by setting $\psi(t)=0$ for every $t\leq 0$. Without loss of generality, we may assume $2\psi(1)<1$. Otherwise, we consider the cuspidal function $\tilde\psi$ defined by setting
\[\tilde\psi(t):=\frac{\psi(t)}{2\psi(1)}\ {\rm for}\ t\in(-\fz, 1).\]
Obviously, $\boz^n_\psi$ and $\boz^n_{\tilde\psi}$ are globally bi-Lipschitz equivalent.

We define
\[U_1:=\lf\{(t, x)\in (-\fz, 1+\psi(1)]\times\rr^{n-1}: |x|\leq 1+\psi(1)-t\r\}, U_2:=\boz_\psi^n\setminus U_1,\]
\[U_3:=\lf\{(t, x)\in(-\fz, \fz)\times\rr^{n-1}:|x|\geq\max\{\psi(1), 1+\psi(1)-t\}\r\},\]
\[U_4:=\lf\{(t, x)\in[2, \fz)\times\rr^{n-1}: |x|\leq\psi(1)\r\}.\]
Obviously, we have $\rn=\bigcup_{i=1}^4U_i$.  
Then we define a global transformation $\mathcal O:\rn\to\rn$ by setting 
\begin{equation}\label{eq:bilip}
\mathcal O(t, x):=\begin{cases}
\lf(\lf(\frac{1}{1+\psi(1)}\r)(t+|x|), x\r), & (t, x)\in U_1,\\
\lf(\lf(\frac{t}{1+|x|-\psi(1)}+\frac{2(|x|-\psi(1))}{1+|x|-\psi(1)}\r), x\r), & (t, x)\in U_2,\\
\lf(t+|x|-\psi(1), x\r), & (t, x)\in U_3,\\
\lf(t, x\r), & (t, x)\in U_4
\,. \end{cases}
\end{equation}
The global homeomorphism $\mathcal O$ is differentiable almost everywhere and there exists a positive constant $C>1$ such that for almost every $z\in\rn$, we have 
\[\frac{1}{C}\leq|D\mathcal O(z)|\leq C.\]
Hence, $\mathcal O:\rn\to\rn$ is a global bi-Lipschitz transformation. By a simple computation, we obtain 
\[\mathcal O(U_1\cap\boz^n_{\psi})=\{(s, y)\in(0, 1]\times\rr^{n-1}: |y|<\hat\psi(s)\}\]
and 
\[\mathcal O(U_2)=\{(s, y)\in(1, 2)\times\rr^{n-1}:|y|<\hat\psi(1)\}.\]
Hence $\mathcal O(\boz^n_\psi)=\boz^n_{\hat\psi}$.
\end{proof}
Let us prove Theorem \ref{prop:mazya}.
{\begin{proof}[Proof of Theorem \ref{prop:mazya}]
Let $\psi:(0, 1]\to(0, \fz)$ be a cuspidal function such that $\psi(t)/t$ is nondecreasing in $(0, 1]$ with $\lim_{t\to 0}\psi(t)/t=0$. Let $\hat\psi$ be the corresponding Lipschitz cuspidal function, defined as in Lemma \ref{le:LipcF}. Following the notation from Lemma \ref{le:LipcF}, for every $\hat t\in(0, 1]$, we have \[t_{\hat t}+r_{\hat t}=(1+\psi(1))\hat t\ {\rm and}\ \hat\psi(\hat t)=r_{\hat t}.\] 
Since $\psi$ is left-continuous and increasing, for every $\hat t\in\lf(0, \frac{1}{1+\psi(1)}\r)$, we have 
\[\psi(t_{\hat t})\leq r_{\hat t}<\psi\lf((1+\psi(1))\hat t\r).\]
 Hence, we have 
$$\lim_{\hat t\to0}\frac{\hat\psi(\hat t)}{\hat t}\leq C\lim_{\hat t\to 0}\frac{\psi(\hat t)}{\hat t}=0$$
 and
 \begin{equation}\label{eq:100}
\frac{\hat\psi(\hat t)}{\hat t}=\frac{r_{\hat t}}{\hat t}=\frac{(1+\psi(1))\hat t-t_{\hat t}}{\hat t}=(1+\psi(1))\lf(1-\frac{t_{\hat t}}{t_{\hat t}+r_{\hat t}}\r).
\end{equation}
By (\ref{eq:100}), to show that $\hat\psi(\hat t)/\hat t$ is increasing, it suffices to show that 
\begin{equation}\label{eq:increasing}
\frac{r_{\hat t_1}}{t_{\hat t_1}}\leq\frac{ r_{\hat t_2}}{t_{\hat t_2}}\ \ {\rm for}\ \  0<\hat t_1\leq\hat t_2<1.
\end{equation}
Since $\psi$ is non-decreasing and $\hat t_1\leq\hat t_2$, we always have $r_{\hat t_1}\leq r_{\hat t_2}$. Hence, if $t_{\hat t_1}=t_{\hat t_2}$, we immediately obtain inequality (\ref{eq:increasing}). If $t_{\hat t_1}<t_{\hat t_2}$, the fact that $\psi$ is non-decreasing implies $r_{\hat t_1}\leq\psi(t_{\hat t_2})\leq r_{\hat t_2}$. Since $\psi$ has at most countably many points of discontinuity, for every $\epsilon>0$, we can find a point $t_\epsilon\in(t_{\hat t_1}, t_{\hat t_1}+\epsilon)$ of continuity of $\psi$  with $r_{\hat t_1}\leq \psi(t_\epsilon)$. The fact that $\psi(t)/t$ is non-decreasing implies 
\[\frac{r_{\hat t_1}}{t_{\hat t_1}}\leq\lim_{\epsilon\to0}\frac{\psi(t_\epsilon)}{t_{\hat t_1}+\epsilon}\leq\lim_{\epsilon\to0}\frac{\psi(t_\epsilon)}{t_\epsilon}\leq\frac{\psi(t_{\hat t_2})}{t_{\hat t_2}}\leq\frac{r_{\hat t_2}}{t_{\hat t_2}}.\]

\textbf{$(1)$:} We show that the assumption that original cuspidal function $\psi$ satisfies the integrability condition (\ref{eq:inc1}) implies that the corresponding Lipschitz cuspidal function $\hat\psi$ also satisfies the integrability condition (\ref{eq:inc1}). Define a measurable subset $B\subset (0, 1]$ by setting 
\[B:=\{\hat t\in(0, 1]: t_{\hat t}\geq r_{\hat t}\}.\]
Then, for every $\hat t\in B$, we have $\hat t\sim t_{\hat t}$ and $\hat\psi(\hat t)=r_{\hat t}\geq\psi(t_{\hat t})$. Hence, we have 
\begin{equation}\label{eq:inc3}
\int_B\lf(\frac{\hat t^s}{\hat\psi(\hat t)}\r)^{\frac{n}{s-1}}\frac{d\hat t}{\hat t}\leq C\int_0^1\lf(\frac{t^s_{\hat t}}{\psi(t_{\hat t})}\r)^{\frac{n}{s-1}}\frac{dt_{\hat t}}{t_{\hat t}}<\fz.
\end{equation}
If $\hat t\in(0, 1]\setminus B$, we have $\hat\psi(\hat t)=r_{\hat t}\sim\hat t$. Then we have 
\begin{equation}\label{eq:inc4}
\int_{(0, 1]\setminus B}\lf(\frac{\hat t^s}{\hat\psi(\hat t)}\r)^{\frac{n}{s-1}}\frac{d\hat t}{\hat t}\leq C\int_0^1\hat t^{n-1}d\hat t<\fz.
\end{equation}
Hence, by combining the last two inequalities, we obtain that $\hat\psi$ satisfies the integrability condition that
\[\int_0^1\lf(\frac{\hat t^s}{\hat\psi(\hat t)}\r)^{\frac{n}{s-1}}\frac{d\hat t}{\hat t}<\fz.\]
In conclusion, we have shown that $\hat\psi$ satisfies all assumptions of the theorems due to Maz'ya and Poborchi in \cite[page 304 and 312]{Mazya}. Hence there exists a bounded linear extension operator $\widetilde E_1: W^{1, p}(\boz^n_{\hat\psi})\to W^{1, q}(\rn)$ whenever $\frac{1+(n-1)s}{n}\leq p<\fz$ and $1\leq q\leq\frac{np}{1+(n-1)s}$. Then, for $\frac{1+(n-1)s}{n}\leq p<\fz$, we define an extension operator $E_1$ on $W^{1,p}(\boz^n_\psi)$ by setting 
\[E_1(u)(x):=\widetilde E_1\lf(u\circ\mathcal O^{-1}\r)(\mathcal O(x))\]
for every function $u\in W^{1,p}(\boz^n_\psi)$ and every $x\in\rn$. By the facts that a bi-Lipschitz transformation preserves first order Sobolev spaces and that $\widetilde E_1: W^{1, p}(\boz^n_{\hat\psi})\to W^{1, q}(\rn)$ is a bounded linear extension operator whenever $\frac{1+(n-1)s}{n}\leq p<\fz$ and $1\leq q\leq\frac{np}{1+(n-1)s}$, we obtain that $E_1:W^{1, p}(\boz^n_\psi)\to W^{1, q}(\rn)$ is also a bounded linear extension operator whenever $\frac{1+(n-1)s}{n}\leq p<\fz$ and $1\leq q\leq\frac{np}{1+(n-1)s}$. The theorem from \cite[page 312]{Mazya} tells us that there exists a bounded linear extension operator $\widetilde E_2:W^{1, p}(\boz^n_{\hat\psi})\to W^{1, q}(\rn)$ whenever $\frac{1+(n-1)s}{2+(n-2)s}\leq p<\fz$ and $1\leq q\leq\frac{(1+(n-1)s)p}{1+(n-1)s+(s-1)p}$. Then, for every $\frac{1+(n-1)s}{2+(n-2)s}\leq p<\fz$, we define an extension operator $E_2$ on $W^{1, p}(\boz^n_\psi)$ by setting
\[E_2(u)(x):=\widetilde E_2\lf(u\circ\mathcal O^{-1}\r)(\mathcal O(x))\]
for every function $u\in W^{1, p}(\boz^n_\psi)$ and $x\in\rn$. By the same reason as above, $E_2:W^{1, p}(\boz^n_\psi)\to W^{1, q}(\rn)$ is a bounded linear extension operator whenever $\frac{1+(n-1)s}{2+(n-2)s}\leq p<\fz$ and $1\leq q\leq\frac{(1+(n-1)s)p}{1+(n-1)s+(s-1)p}$. 

\textbf{$(2):$} By an argument similar with the first case $(1)$, we obtain that if the original cuspidal function $\psi$ satisfies the integrability condition (\ref{eq:inc2}) then the corresponding Lipschitz cuspidal function $\hat\psi$ also satisfies the integrability condition (\ref{eq:inc2}). Hence, we have shown that $\hat\psi$ satisfies all the assumptions of the theorem from \cite[page 308]{Mazya}. This theorem tells us that there exists a bounded linear extension operator $\widetilde E_3:W^{1, p}(\boz^n_{\hat\psi})\to W^{1, q}(\rn)$ whenever $\frac{(n-1)^2s+(n-1)}{n}\leq p<\fz$ and $1\leq q\leq n-1$. For every $\frac{(n-1)^2s+(n-1)}{n}\leq p<\fz$, we define an extension operator $E_3$ on $W^{1, p}(\boz^n_\psi)$ by setting \[E_3(u)(x):=\widetilde E_3\lf(u\circ\mathcal O^{-1}\r)(\mathcal O(x))\]
for every function $u\in W^{1, p}(\boz^n_\psi)$ and $x\in\rn$. By the same reason as above, $E_3:W^{1, p}(\boz^n_\psi)\to W^{1, q}(\rn)$ is a bounded linear extension operator for every $\frac{(n-1)^2s+(n-1)}{n}\leq p<\fz$ and $1\leq q\leq n-1$.

Next, let us show the necessity of integrability conditions (\ref{eq:inc1}) and (\ref{eq:inc2}). First, let us show that if the doubling condition (\ref{eq:doubling}) holds for a cuspidal function $\psi$, it also holds for its corresponding Lipschitz cuspidal function $\hat\psi$. Since $\hat\psi$ is Lipschitz and increasing, it suffices to show that there exists a positive constant $C>1$ such that for every $\hat t\in\lf(0, \frac{1}{2(1+\psi(1))}\r]$, we have 
\begin{equation}\label{eq:Doub}
\hat\psi(2\hat t)\leq C\hat\psi(\hat t).
\end{equation}
Let $\hat t\in\lf(0, \frac{1}{2(1+\psi(1))}\r]$ be arbitrary. There exists a unique pair $(t_{\hat t}, r_{\hat t})$ with 
\begin{equation}\label{eq:Ineq1}
\psi(t_{\hat t})\leq r_{\hat t}\leq\lim_{s\to t_{\hat t}^+}\psi(s)
\end{equation}
and 
\begin{equation}\label{eq:Ineq2}
t_{\hat t}+r_{\hat t}=(1+\psi(1))\hat t.
\end{equation}
Moreover, there exists a unique pair $(t_{2\hat t}, r_{2\hat t})$ with 
\begin{equation}\label{eq:Ineq3}
\psi(t_{2\hat t})\leq r_{2\hat t}\leq\lim_{s\to t^+_{2\hat t}}\psi(s)
\end{equation}
and 
\begin{equation}\label{eq:Ineq4}
t_{2\hat t}+r_{2\hat t}=(1+\psi(1))2\hat t.
\end{equation}
By the definition in Lemma \ref{le:LipcF}, we have 
\[\hat\psi(2\hat t)=r_{2\hat t}\ {\rm and}\ \hat\psi(\hat t)=r_{\hat t}.\]
If $r_{2\hat t}\leq 2 r_{\hat t}$, then (\ref{eq:Doub}) holds with $C=2$. Hence, we assume $r_{2\hat t}>2r_{\hat t}$. By (\ref{eq:Ineq2}) and (\ref{eq:Ineq4}), we have $t_{2\hat t}<2 t_{\hat t}$. Since $\psi$ is increasing and satisfies inequality (\ref{eq:doubling}), by (\ref{eq:Ineq1}) and (\ref{eq:Ineq3}), we have 
\[r_{2\hat t}\leq\psi(2t_{\hat t})\leq C\psi(t_{\hat t})\leq Cr_{\hat t}.\]
We have showed inequality (\ref{eq:Doub}) holds for every $\hat t\in\lf(0, \frac{1}{2(1+\psi(1))}\r]$. The fact that $\boz^n_\psi$ is globally bi-Lipschitz equivalent to $\boz^n_{\hat\psi}$ implies that $\boz^n_\psi$ and $\boz^n_{\hat\psi}$ have the same Sobolev extension properties. By the results due to Maz'ya and Poborch in \cite[pages 304 and 312]{Mazya}, if $\boz^n_{\hat\psi}$ is a Sobolev $(p, q)$-extension domain with \[\lf(\frac{1+(n-1)s}{n}\leq p<\fz, 1\leq q\leq\frac{np}{1+(n-1)s}\r)\] or \[\lf(\frac{1+(n-1)s}{2+(n-2)s}\leq p<\fz, 1\leq q\leq\frac{(1+(n-1)s)p}{1+(n-1)s+(s-1)p}\r),\] then $\hat\psi$ satisfies the integrability condition
\[\int_0^1\lf(\frac{t^s}{\hat\psi(t)}\r)^{\frac{n}{s-1}}\frac{dt}{t}<\fz.\] 
By the theorem from \cite[page 308]{Mazya}, if $\boz^n_{\hat\psi}$ is a Sobolev $(p, q)$-extension domain with $\frac{(n-1)^2s+(n-1)}{n}\leq p<\fz$ and $1\leq q\leq n-1$, then $\hat\psi$ satisfies the integrability condition
\[\int_0^1\lf(\frac{t^s}{\hat\psi(t)}\r)^{\frac{n}{s-1}}\left|\log\lf(\frac{\hat\psi(t)}{t}\r)\right|^{-\alpha}\frac{dt}{t}<\fz\]
for $\alpha=\frac{(n-2)p}{p+1-n}$. By the definition of $\hat\psi$, for every $t\in(0, 1]$, we always have 
\[\psi(t)\geq\hat\psi\lf(\frac{1}{1+\psi(1)}t\r).\]
Hence, if the corresponding Lipschitz cuspidal function $\hat\psi$ satisfies the integrability conditions (\ref{eq:inc1}) and (\ref{eq:inc2}), the original cuspidal function $\psi$ also satisfies them.
\end{proof}}

Let us prove Theorem \ref{thm:main}.
\begin{proof}[Proof of Theorem \ref{thm:main}]
As we mentioned, every outward cuspidal domain is a Sobolev $(\fz, \fz)$-extension domain. Hence, it suffices to deal with the case $1\leq q\leq p<\fz$. We first prove the result for Lipschitz outward cuspidal domains and then extend the result to arbitrary outward cuspidal domains via the global bi-Lipschitz equivalence method established in Proposition \ref{prop:bilip}.

Let $1\leq q<n-1$ and $(n-1)q/(n-1-q)\leq p<\fz$ be fixed. Let $\psi$ be a Lipschitz cuspidal function. We define a cylinder $\widehat{\mathsf C}_o$ by setting
\begin{equation}\label{eq:cylin}
\widehat{\mathsf C}_o:=\{(t, x)\in[1, 3)\times\rr^{n-1}: |x|<2\psi(1)\}.
\end{equation}
Then we define two sub-cylinders $\widehat{\mathsf C}_o^1$ and $\widehat{\mathsf C}_o^2$ of $\widehat{\mathsf C}_o$ by setting
\[\widehat{\mathsf C}_o^1:=\{(t, x)\in(1, 2)\times\rr^{n-1}:|x|<2\psi(1)\}\]
and 
\[\widehat{\mathsf C}_o^2:=\{(t, x)\in(2, 3)\times\rr^{n-1}:|x|<2\psi(1)\}.\]
We also define a sub-cylinder $\mathsf C_o^1$ of $\widehat{\mathsf C}_o^1$ by setting 
\[\mathsf C_o^1:=\{(t, x)\in(1, 2)\times\rr^{n-1}: |x|<\psi(1)\}.\]
Then $A_{\mathsf C_o^1}:=\widehat{\mathsf C}_o^1\setminus\overline{\mathsf C}_o^1$ is an annular set. We define a reflection $\widetilde{\mathcal R_1}: A_{\mathsf C_o^1}\to\mathsf C_o^1$ by setting
\begin{equation}\label{eq:ref0}
\widetilde{\mathcal R_1}(t, x):=\lf(t, \lf(\frac{3}{2}\psi(1)-\frac{|x|}{2}\r)\frac{x}{|x|}\r).
\end{equation}
There exists a positive constant $C$ such that for every $(t, x)\in A_{\mathsf C_o^1}$, we have 
\begin{equation}\label{eq:est1}
|D\widetilde{\mathcal R_1}(t, x)|\leq C\ {\rm and}\ \frac{1}{C}\leq|J_{\widetilde{\mathcal R_1}}(t, x)|\leq C.
\end{equation}
We also define a cut-off function $\widetilde L_1$ on the annular set $\overline{A_{\mathsf C_o^1}}$ with $\widetilde L_1\equiv 0$ on $[1, 2]\times\partial B^{n-1}(0, 2\psi(1))$ and $\widetilde L_1\equiv 1$ on $[1, 2]\times\partial B^{n-1}(0, \psi(1))$ by setting 
\begin{equation}\label{eq:cutoff1}
\widetilde L_1(t, x):=2-\frac{|x|}{\psi(1)}.
\end{equation}
There exists a positive constant $C$, such that for every $(t, x)\in A_{\mathsf C_o^1}$, we have 
\begin{equation}\label{eq:bond1}
|\nabla\widetilde L_1(t, x)|\leq C
\end{equation}
Next, we define a reflection $\widetilde{\mathcal R_2}:\widehat{\mathsf C}_o^2\to\widehat{\mathsf C}_o^1$ by setting 
\begin{equation}\label{eq:Ref1}
\widetilde{\mathcal R_2}(t, x):=(t-2, x).
\end{equation} 
There exists a positive constant $C$, such that for every $(t, x)\in\widehat{\mathsf C}_o^2$, we have
\begin{equation}\label{eq:bond2}
|D\widetilde{\mathcal R_2}(t, x)|\leq C\ {\rm and}\ \frac{1}{C}\leq|J_{\widetilde{\mathcal R_2}}(t, x)|\leq C.
\end{equation}
Moreover we define a cut-off function $\widetilde L_2$ on $\overline{\widehat{\mathsf C}_o^2}$  with $\widetilde L_2\equiv 1$ on $\{2\}\times\overline{B^{n-1}(0, 2\psi(1))}$ and $\widetilde L_2\equiv 0$ on $\{3\}\times\overline{B^{n-1}(0, 2\psi(1))}$ by setting
\begin{equation}\label{eq:cutoff2}
\widetilde L_2(t, x):=3-t.
\end{equation}
Then, for every $(t, x)\in\widehat{\mathsf C}_o^2$, we have 
\begin{equation}\label{eq:bond3}
|\nabla\widetilde L_2(t, x)|\leq 2.
\end{equation}
Next, we define a double outward cuspidal domain $\widehat{\boz_\psi^n}$ by setting 
\[\widehat{\boz_\psi^n}:=\{(t, x)\in(0, 1]\times\rr^{n-1}: |x|<2\psi(t)\}\cup\widehat{\mathsf C}_o.\]
We will construct a bounded linear extension operator $E$ from $W^{1,p}(\boz_\psi^n)$ to $W^{1,q}(\rn)$ such that for every function $u\in W^{1.p}(\boz_\psi^n)$, we have $E(u)=0$ on $\partial\widehat{\boz_\psi^n}\setminus\{0\}$. We define an annular-type set by setting
$$A_\psi^n:=\{(t, x)\in(0, 1]\times\rr^{n-1}: \psi(t)<|x|<2\psi(t)\}.$$
Moreover we define a reflection $\mathcal R: A_\psi^n\to\boz_\psi^n$ by setting 
\begin{equation}\label{eq:3ref}
\mathcal R(z)=\mathcal R(t, x):=\lf(t, \lf(\frac{-|x|}{2}+\frac{3}{2}\psi(t)\r)\frac{x}{|x|}\r).
\end{equation}
Since $\psi$ is Lipschitz, there exists a positive constant $C$ such that for every $z=(t, x)\in A_\psi^n$, we have 
\begin{equation}\label{eq:Esref}
|D\mathcal R(z)|\leq C\ {\rm and}\ \frac{1}{C}\leq\lf|J_{\mathcal R}(z)\r|\leq C.
\end{equation}
We define a cut-off function $L$ on $A_\psi^n$ by setting 
\begin{equation}\label{eq:Cutoff}
L(t, x):=\frac{-|x|}{\psi(t)}+2.
\end{equation}
Since $\psi$ is Lipschitz and 
$$\psi(t)<|x|<2\psi(t)$$
for every point $(t, x)\in A_\psi^n$, there exists a positive constant $C$ such that 
\begin{equation}\label{eq:HP1}
\lf|\nabla L(t, x)\r|\leq\frac{C}{\psi(t)}\ {\rm for\ almost\ every}\ (t, x)\in A_\psi^n.
\end{equation}

By Lemma \ref{lem:density}, $C_o^\fz(\rn)\cap W^{1, p}(\boz_\psi^n)$ is dense in $W^{1, p}(\boz_\psi^n)$. We first define a bounded linear extension operator from the dense subspace $C_o^\fz(\rn)\cap W^{1,p}(\boz_\psi^n)$ to $W^{1,q}(\rn)$ and then extend it to the full space $W^{1,p}(\boz_\psi^n)$. Let $u\in C_o^\fz(\rn)\cap W^{1,p}(\boz_\psi^n)$ be fixed. We define an extension $E(u)$ on $\widehat{\boz_\psi^n}$ by setting 
\begin{equation}\label{eq:ExtenO}
E(u)(z):=\begin{cases}
u(z), & z\in\overline{\boz_\psi^n}, \\
L(z)(u\circ\mathcal R)(z), & z\in A_\psi^n,\\
\widetilde L_1(z)(u\circ\widetilde{\mathcal R}_1)(z), & z\in A_{\mathsf C_o^1}, \\
\widetilde L_2(z)(E(u)\circ\widetilde{\mathcal R}_2)(z), & z\in \widehat{\mathsf C}_o^2\, .
\end{cases}
\end{equation}
Then, $E(u)$ is continuous on $\widehat{\boz_\psi^n}$ with $E(u)\equiv 0$ on $\partial\widehat{\boz_\psi^n}\setminus\{0\}$. Hence, we can simply extend $E(u)$ to be zero outside the domain $\widehat{\boz_\psi^n}$. First, let us estimate the $L^q$-norm of $E(u)$. By the definition of $E(u)$, the H\"older inequality implies 
\begin{equation}\label{eq:lpno1}
\lf(\int_{\boz_\psi^n}|E(u)(z)|^qdz\r)^{\frac{1}{q}}\leq C\lf(\int_{\boz_\psi^n}|u(z)|^pdz\r)^{\frac{1}{p}}.
\end{equation}
By (\ref{eq:bond2}) and the fact that $0\leq\widetilde L_2\leq 1$ on $\widehat{\mathsf C}_o^2$, the change of variables formula implies
\begin{equation}\label{eq:lpno2}
\int_{\widehat{\mathsf C}_o^2}|E(u)(z)|^qdz\leq C\int_{\widehat{\mathsf C}_o^2}\lf|\lf(E(u)\circ\widetilde{\mathcal R}_2\r)(z)\r|^qdz\leq C \int_{\widehat{\mathsf C}_o^1}|E(u)(z)|^qdz.
\end{equation}
By (\ref{eq:est1}) and the fact that $0\leq\widetilde L_1\leq 1$ on $A_{\mathsf C_o^1}$, the change of variables formula and the H\"older inequality imply 
\begin{multline}\label{eq:lpno3}
\lf(\int_{\widehat{\mathsf C}_o^1}|E(u)(z)|^qdz\r)^{\frac{1}{q}}\leq C\lf(\int_{\mathsf C_o^1}\lf|E(u)(z)\r|^qdz\r)^{\frac{1}{q}}\\
+C\lf(\int_{A_{\mathsf C_o^1}}\lf|E(u)(z)\r|^qdz\r)^{\frac{1}{q}}\leq C\lf(\int_{\boz_\psi^n}|u(z)|^pdz\r)^{\frac{1}{p}}.
\end{multline}
By combining (\ref{eq:lpno2}) and (\ref{eq:lpno3}), we obtain 
\begin{equation}\label{eq:lpno4}
\lf(\int_{\widehat{\mathsf C}_o}|E(u)(z)|^qdz\r)^{\frac{1}{q}}\leq C\lf(\int_{\boz_\psi^n}|u(z)|^pdz\r)^{\frac{1}{p}}.
\end{equation}
By (\ref{eq:Esref}) and the fact that $0\leq L(z)\leq 1$ on $A_\psi^n$, the change of variables formula and the H\"older inequality imply 
\begin{equation}\label{eq:Hp1}
\lf(\int_{A_\psi^n}|L(z)(u\circ\mathcal R)(z)|^qdz\r)^{\frac{1}{q}}\leq \lf(\int_{\boz_\psi^n}|u(z)|^pdz\r)^{\frac{1}{p}}.
\end{equation}
Combining inequalities (\ref{eq:lpno1}), (\ref{eq:lpno4}) and (\ref{eq:Hp1}), we obtain 
\begin{equation}\label{eq:Hp2}
\lf(\int_{\widehat{\boz_\psi^n}}|E(u)(z)|^qdz\r)^{\frac{1}{q}}\leq C\lf(\int_{\boz_\psi^n}|u(z)|^pdz\r)^{\frac{1}{p}}.
\end{equation}

Now, let us estimate the $L^q$-norm of $\lf|\nabla E(u)\r|$. First, the H\"older inequality implies
\begin{equation}\label{eq:Whp1}
\lf(\int_{\boz_\psi^n}|\nabla E(u)(z)|^qdz\r)^{\frac{1}{q}}\leq C\lf(\int_{\boz_\psi^n}|\nabla u(z)|^pdz\r)^{\frac{1}{p}}.
\end{equation} 
The chain rule implies that for almost every $z\in A_{\mathsf C_o^1}$, we have 
\begin{equation}\label{eq:spno30}
\lf|\nabla E(u)(z)\r|\leq\lf|\nabla\tilde L_1(z)(u\circ\widetilde{\mathcal R}_1)(z)\r|+\lf|\tilde L_1(z)\nabla(u\circ\widetilde{\mathcal R}_1)(z)\r|.
\end{equation}
By (\ref{eq:est1}) and (\ref{eq:bond1}), the change of variables formula and the H\"older inequality imply
\begin{equation}\label{eq:spno31}
\lf(\int_{A_{\mathsf C_o^1}}\lf|\nabla\tilde L_1(z)(u\circ\widetilde{\mathcal R}_1)(z)\r|^qdz\r)^{\frac{1}{q}}\leq C\lf(\int_{\boz_\psi^n}\lf|u(z)\r|^pdz\r)^{\frac{1}{p}}.
\end{equation}
By (\ref{eq:est1}) and the fact that $0\leq\tilde L_1(z)\leq 1$ for every $z\in A_{\mathsf C_o^1}$, the change of variables formula and the H\"older inequality imply 
\begin{equation}\label{eq:spno32}
\lf(\int_{A_{\mathsf C_o^1}}\lf|\tilde L_1(z)\nabla(u\circ\widetilde{\mathcal R}_1)(z)\r|^qdz\r)^{\frac{1}{q}}\leq C\lf(\int_{\boz_\psi^n}\lf|\nabla u(z)\r|^pdz\r)^{\frac{1}{p}}.
\end{equation}
By combining (\ref{eq:spno30}), (\ref{eq:spno31}) and (\ref{eq:spno32}), we obtain 
\begin{equation}\label{eq:spno33}
\lf(\int_{A_{\mathsf C_o^1}}\lf|\nabla E(u)(z)\r|^qdz\r)^{\frac{1}{q}}\leq C\lf(\int_{\boz_\psi^n}\lf|u(z)\r|^p+\lf|\nabla u(z)\r|^pdz\r)^{\frac{1}{p}}.
\end{equation}
The chain rule implies that for almost every $z\in\widehat{\mathsf C}_o^2$, we have 
\begin{equation}\label{eq:spno34}
\lf|\nabla E(u)(z)\r|\leq\lf|\nabla\tilde L_2(z)\lf(E(u)\circ\widetilde{\mathcal R}_2\r)(z)\r|+\lf|\tilde L_2(z)\nabla\lf(E(u)\circ\widetilde{\mathcal R}_2\r)(z)\r|.
\end{equation}
By (\ref{eq:bond2}), (\ref{eq:bond3}) and (\ref{eq:lpno3}), the change of variables formula and the H\"older inequality imply
\begin{multline}\label{eq:spno35}
\lf(\int_{\widehat{\mathsf C}_o^2}\lf|\nabla\tilde L_2(z)\lf(E(u)\circ\widetilde{\mathcal R}_2\r)(z)\r|^qdz\r)^{\frac{1}{q}}\leq C\lf(\int_{\widehat{\mathsf C}_0^1}\lf|E(u)(z)\r|^qdz\r)^{\frac{1}{q}}\\
\leq C\lf(\int_{\boz_\psi^n}\lf|u(z)\r|^pdz\r)^{\frac{1}{p}}.
\end{multline}
By (\ref{eq:bond2}) and the fact that $0\leq\tilde L_2(z)\leq 1$ for almost every $z\in\widehat{\mathsf C}_o^2$, the change of variables formula, (\ref{eq:Whp1}) and (\ref{eq:spno33}) imply 
\begin{multline}\label{eq:spno36}
\lf(\int_{\widehat{\mathsf C}_o^2}\lf|\tilde L_2(z)\nabla\lf(E(u)\circ\widetilde{R}_2\r)(z)\r|^qdz\r)^{\frac{1}{q}}\leq C\lf(\int_{\widehat{\mathsf C}_o^1}\lf|\nabla E(u)(z)\r|^qdz\r)^{\frac{1}{q}}\\
\leq C\lf(\int_{A_{\mathsf C_o^1}}\lf|\nabla E(u)(z)\r|^qdz\r)^{\frac{1}{q}}+\lf(\int_{\mathsf C_o^1}\lf|\nabla E(u)(z)\r|^qdz\r)^{\frac{1}{q}}\\
\leq C\lf(\int_{\boz_\psi^n}\lf|u(z)\r|^p+\lf|\nabla u(z)\r|^pdz\r)^{\frac{1}{p}}.
\end{multline}
Hence, by combining (\ref{eq:spno34}), (\ref{eq:spno35}) and (\ref{eq:spno36}), we obtain 
\begin{equation}\label{eq:spno37}
\lf(\int_{\widehat{\mathsf C}^2_o}\lf|\nabla E(u)(z)\r|^qdz\r)^{\frac{1}{q}}\leq C\lf(\int_{\boz_\psi^n}\lf|u(z)\r|^p+\lf|\nabla u(z)\r|^pdz\r)^{\frac{1}{p}}.
\end{equation}
Since $\mathsf C_o^1\subset\boz_\psi^n$, by combining (\ref{eq:Whp1}), (\ref{eq:spno33}) and (\ref{eq:spno37}), we obtain 
\begin{equation}\label{eq:spno38}
\lf(\int_{\widehat{\mathsf C}_o}\lf|\nabla E(u)(z)\r|^qdz\r)^{\frac{1}{q}}\leq C\lf(\int_{\boz_\psi^n}\lf|u(z)\r|^p+\lf|\nabla u(z)\r|^pdz\r)^{\frac{1}{p}}.
\end{equation}
The chain rule implies that for almost every $z\in A_\psi^n$, we have 
\begin{equation}\label{eq:Hp3}
\lf|\nabla E(u)(z)\r|\leq \lf|\nabla L(z)(u\circ\mathcal R)(z)\r|+\lf|L(z)\nabla(u\circ\mathcal R)(z)\r|.
\end{equation}
By (\ref{eq:Esref}) and the fact $0\leq L(z)\leq 1$ on $A_\psi^n$, the change of variables formula and the H\"older inequality yield 
\begin{equation}\label{eq:Hp4}
\lf(\int_{A_\psi^n}\lf|L(z)\nabla(u\circ\mathcal R)(z)\r|^qdz\r)^{\frac{1}{q}}\leq \lf(\int_{\boz_\psi^n}|\nabla u(z)|^pdz\r)^{\frac{1}{p}}.
\end{equation} 
The H\"older inequality implies 
\begin{equation}\label{eq:Hp5}
\lf(\int_{A_\psi^n}\lf|\nabla L(z)(u\circ\mathcal R)(z)\r|^qdz\r)^{\frac{1}{q}}\leq\lf(\int_{A_\psi^n}|\nabla L(z)|^{\frac{pq}{p-q}}dz\r)^{\frac{p-q}{pq}}\cdot\lf(\int_{A_\psi^n}|(u\circ\mathcal R)(z)|^pdz\r)^{\frac{1}{p}}.
\end{equation}
By (\ref{eq:Esref}), the change of variables formula implies 
\begin{equation}\label{eq:Hp6}
\int_{A_\psi^n}|(u\circ\mathcal R)(z)|^pdz\leq C\int_{\boz_\psi^n}|u(z)|^pdz.
\end{equation}
By (\ref{eq:HP1}) and the fact that $\psi$ is Lipschitz, we have 
\begin{equation}\label{eq:Hp7}
\int_{A_\psi^n}|\nabla L(z)|^{\frac{pq}{p-q}}\leq C\int_0^1\psi(t)^{n-1-\frac{pq}{p-q}}dt<C\int_0^1t^{n-1-\frac{pq}{p-q}}dt<\fz
\end{equation}
whenever $1\leq q<n-1$ and $(n-1)q/(n-1-q)\leq p<\fz$. By combining inequalities (\ref{eq:Hp5}), (\ref{eq:Hp6}) and (\ref{eq:Hp7}), we obtain 
\begin{equation}\label{eq:Hp8}
\lf(\int_{A_\psi^n}\lf|\nabla L(z)(u\circ\mathcal R)(z)\r|^qdz\r)^{\frac{1}{q}}\leq C\lf(\int_{\boz_\psi^n}|u(z)|^pdz\r)^{\frac{1}{p}}.
\end{equation}
By combining (\ref{eq:Hp3}), (\ref{eq:Hp4}) and (\ref{eq:Hp8}), we obtain 
\begin{equation}\label{eq:Hp9}
\lf(\int_{A_\psi^n}\lf|\nabla E(u)(z)\r|^qdz\r)^{\frac{1}{q}}\leq C\lf(\int_{\boz_\psi^n}|u(z)|^p+|\nabla u(z)|^pdz\r)^{\frac{1}{p}}.
\end{equation}
By combining inequalities (\ref{eq:Whp1}), (\ref{eq:spno37}) and (\ref{eq:Hp9}), we obtain 
\begin{equation}\label{eq:Hp10}
\lf(\int_{\widehat{\boz_\psi^n}}|\nabla E(u)(z)|^qdz\r)^{\frac{1}{q}}\leq C\lf(\int_{\boz_\psi^n}|u(z)|^p+|\nabla u(z)|^pdz\r)^{\frac{1}{p}}.
\end{equation}
Finally, by combining (\ref{eq:Hp2}) and (\ref{eq:Hp10}), we obtain the desired norm inequality
\begin{equation}\label{eq:Hp11}
\lf(\int_{\widehat{\boz_\psi^n}}\lf|E(u)(z)\r|^q+\lf|\nabla E(u)(z)\r|^qdz\r)^{\frac{1}{q}}\leq C\lf(\int_{\boz_\psi^n}|u(z)|^p+|\nabla u(z)|^pdz\r)^{\frac{1}{p}}.
\end{equation}
Hence, the extension operator $E$ defined in (\ref{eq:ExtenO}) is a bounded linear extension operator from the dense subspace $C_o^\fz(\rn)\cap W^{1,p}(\boz_\psi^n)$ to $W^{1,q}(\rn)$ whenever $1\leq q<n-1$ and $(n-1)q/(n-1-q)\leq p<\fz$. For an arbitrary $u\in W^{1,p}(\boz_\psi^n)$, there exists a Cauchy sequence $\{u_m\}_{m=1}^\fz\subset C_o^\fz(\rn)\cap W^{1,p}(\boz_\psi^n)$ which converges to $u$ with respect to the $W^{1,p}$-norm. Since $E$ is a  bounded linear extension operator from $C_o^\fz(\rn)\cap W^{1,p}(\boz_\psi^n)$ to $W^{1,q}(\rn)$, there exists a subsequence of $\{u_m\}$ which converges to $u$ almost everywhere on $\boz_\psi^n$ and $\{E(u_m)\}$ is also a Cauchy sequence in $W^{1,q}(\rn)$ which converges to some function $v\in W^{1,q}(\rn)$. To simplify the notation, we still denote this subsequence by $\{u_m\}$. Then, we have $v\big|_{\boz_\psi^n}(z)=u(z)$ for almost every $z\in\boz_\psi^n$ and 
\[\|v\|_{W^{1,q}(\rn)}\leq\lim_{m\to\fz}\|E(u_m)\|_{W^{1,q}(\rn)}\leq C\lim_{m\to\fz}\|u_m\|_{W^{1,p}(\boz_\psi^n)}\leq C\|u\|_{W^{1,p}(\boz_\psi^n)}.\]
Furthermore, by picking an extra subsequence if necessary, it follows that $v(z)=E(u)(z)$ for almost every $z\in\rn$. Hence, $E$ is a bounded linear extension operator from $W^{1,p}(\boz_\psi^n)$ to $W^{1,q}(\rn)$, whenever $1\leq q<n-1$ and $(n-1)q/(n-1-q)\leq p<\fz$. In conclusion, we have proved that a Lipschitz outward cuspidal domain $\boz_\psi^n$ is a Sobolev $(p, q)$-extension domain, whenever $1\leq q<n-1$ and $(n-1)q/(n-1-q)\leq p<\fz$.

Next, we extend the result to an arbitrary outward cuspidal domain by using global bi-Lipschitz transformations. Let $\psi:(0, 1]\to(0, \fz)$ be an arbitrary cuspidal domain. By Proposition \ref{prop:bilip}, there exists a Lipschitz cuspidal function $\hat\psi$ and a global bi-Lipschitz homeomorphism $\mathcal O:\rn\to\rn$ with $\mathcal O(\boz_\psi^n)=\boz_{\hat\psi}^n$. Fix $1\leq q<n-1$ and $(n-1)q/(n-1-q)\leq p<\fz$. Let $u\in W^{1,p}(\boz_\psi^n)$ be arbitrary. Since $\mathcal O$ is bi-Lipschitz with $\mathcal O(\boz_\psi^n)=\boz_{\hat\psi}^n$, we have $u\circ\mathcal O^{-1}\in W^{1,p}(\boz_{\hat\psi}^n)$ with 
\begin{equation}\label{eq:ineq1}
\|u\circ\mathcal O^{-1}\|_{W^{1,p}(\boz_{\hat\psi}^n)}\leq C\|u\|_{W^{1, p}(\boz_\psi^n)}.
\end{equation}
By the argument above, the Lipschitz outward cuspidal domain $\boz_{\hat\psi}^n$ is a Sobolev $(p, q)$-extension domain, there exists a function $E(u\circ\mathcal O^{-1})\in W^{1,q}(\rn)$ with $E(u\circ\mathcal O^{-1})\big|_{\boz_{\hat\psi}^n}\equiv u\circ\mathcal O^{-1}$ and
\begin{equation}\label{eq:ineq2}
\|E(u\circ\mathcal O^{-1})\|_{W^{1,q}(\rn)}\leq C\|u\circ\mathcal O^{-1}\|_{W^{1,p}(\boz_{\hat\psi}^n)}.
\end{equation}
By the fact that $\mathcal O$ is bi-Lipschitz, we have $(E(u\circ\mathcal O^{-1}))\circ\mathcal O\in W^{1,q}(\rn)$ with
\begin{equation}\label{eq:ineq3}
\|(E(u\circ\mathcal O^{-1}))\circ\mathcal O\|_{W^{1,q}(\rn)}\leq C\|E(u\circ\mathcal O^{-1})\|_{W^{1,q}(\rn)}.
\end{equation}
By the definitions, we have $(E(u\circ\mathcal O^{-1}))\circ\mathcal O\big|_{\boz_\psi^n}\equiv u$. By combining (\ref{eq:ineq1}), (\ref{eq:ineq2}) and (\ref{eq:ineq3}), we obtain the desired norm inequality 
\[\|(E(u\circ\mathcal O^{-1}))\circ\mathcal O\|_{W^{1,q}(\rn)}\leq C\|u\|_{W^{1, p}(\boz_\psi^n)}.\]
Hence, $\boz_\psi^n$ is a Sobolev $(p, q)$-extension domain whenever $1\leq q<n-1$ and $(n-1)q/(n-1-q)\leq p<\fz$.
\end{proof}

Let us prove Proposition \ref{proposition} which will show the sharpness of Theorem \ref{thm:main}.
\begin{proof}[Proof of Proposition \ref{proposition}]
Let $n-1\leq p<\fz$. Set 
\[s_1:=\frac{np-(n-1)}{(n-1)^2}.\] 
By Theorem \ref{prop:mazya}, $\boz^n_{t^s}$ is a Sobolev $(p, n-1)$-extension domain if and only if $1\leq s\leq s_1$. Let $1\leq q<n-1$ and $q\leq p<\frac{(n-1)q}{n-1-q}$. Set 
\[s_2:=\frac{pq+p-q}{pq+(n-1)(q-p)}.\]
By Theorem \ref{prop:mazya}, $\boz^n_{t^s}$ is a Sobolev $(p, q)$-extension domain if and only if $1\leq s<s_2$.
\end{proof}

\section{Further comments}
In the monograph \cite{Mazya} and their papers referred to therein, Maz'ya and Poborchi also dealt with  generalized outward cuspidal domains with Lipschitz base domains. To be more precise, for a bounded Lipschitz domain $U\in\rr^{n-1}$ with $0\in U$ and a cuspidal function $\psi:(0, 1]\to(0, \fz)$, the corresponding generalized outward cuspidal domain with the base domain $U$ is defined by setting
\begin{equation}\label{eq:Lipcusp}
U_\psi^n:=\lf\{(t, x)\in(0, 1]\times\rr^{n-1}: x\in \psi(t)U\r\}\cup\lf\{(t, x)\in(1, 2)\times\rr^{n-1}: x\in\psi(1)U\r\}.
\end{equation} 
We have only discussed outward cuspidal domains whose base domains are the unit ball. Maz'ya and Poborchi established results for generalized Lipschitz outward cuspidal domains $U_\psi^n$ with Lipschitz base domains $U\subset\rr^{n-1}$. Hence, our results in Theorem \ref{prop:mazya} only extend Maz'ya and Poborchi's results in a special case. For a full extension, one would need to establish a global bi-Lipschitz equivalence of Proposition \ref{prop:bilip} analog for general outward cuspidal domains $U_\psi^n$. This appears to be technically challenging but we expect it to be doable.
\begin{conjecture}
For every generalized outward cuspidal domain $U_\psi^n$, there exists a Lipschitz cuspidal function $\hat\psi$, a Lipschitz base domain $\hat U$ and a global bi-Lipschitz transformation $\mathcal O:\rr^n\to\rr^n$ with $\mathcal O(U_\psi^n)=\hat U_{\hat\psi}^n$.
\end{conjecture}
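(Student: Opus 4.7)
My approach would follow the blueprint of Lemma \ref{le:LipcF} and Proposition \ref{prop:bilip}, replacing the Euclidean norm $|x|$ by a Lipschitz gauge adapted to $U$, and splitting the argument into a base-regularization step and a $\psi$-regularization step. First, I would try to reduce to the case where $U$ is star-shaped with respect to the origin: a bounded Lipschitz domain in $\rr^{n-1}$ admits a global bi-Lipschitz self-map of $\rr^{n-1}$ carrying it to a star-shaped Lipschitz domain $V$, a fact familiar from the bi-Lipschitz geometry of Lipschitz domains. The challenge is to lift this base-level map $\Phi$ to a bi-Lipschitz map $\tilde\Phi:\rr^n\to\rr^n$ with $\tilde\Phi(U_\psi^n)=V_\psi^n$ (up to a Lipschitz reparametrization of $\psi$). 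Since scaled sections $\psi(t)U$ must be sent to $\psi(t)V$, the naive lift $(t,x)\mapsto(t,\Phi(x))$ works only when $\Phi$ is positively $1$-homogeneous; otherwise one is led to the scale-adapted lift $(t,x)\mapsto(t,\psi(t)\Phi(x/\psi(t)))$, glued carefully to the identity outside the cuspidal region.

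Assuming the reduction succeeds and $U=V$ is star-shaped Lipschitz, the Minkowski functional $\rho_V(x)=\inf\{\lambda>0:x\in\lambda V\}$ is Lipschitz and $1$-homogeneous on $\rr^{n-1}$, and I would use it throughout in place of $|x|$. Along the cuspidal boundary of $V_\psi^n$ one has $\rho_V(x)=\psi(t)$, so the level function $T(t,x)=t+\rho_V(x)$ depends only on $t$ there, and the proof of Lemma \ref{le:LipcF} transfers verbatim to produce a Lipschitz cuspidal function $\hat\psi$. The global transformation of Proposition \ref{prop:bilip} generalizes in the same way: in the cuspidal piece one sets
\[
\mathcal O(t,x)=\lf(\tfrac{1}{1+\psi(1)}(t+\rho_V(x)),\,x\r),
\]
with analogous formulas on the other regions $U_2,U_3,U_4$ from (\ref{eq:bilip}). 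Lipschitzness of $\rho_V$, which genuinely uses star-shapedness, yields bi-Lipschitzness of $\mathcal O$, and one checks $\mathcal O(V_\psi^n)=V_{\hat\psi}^n$ directly, so the desired $\hat U$ may be taken to equal $V$.

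The main obstacle I expect is the base-regularization step. A direct estimate shows that the scale-adapted lift $\tilde\Phi$ is bi-Lipschitz in $x$ for each fixed $t$, but its bi-Lipschitz constant in $t$ is controlled by the modulus of continuity of $\psi$; concretely, for $(t_1,x_2),(t_2,x_2)$ with $x_2\in\psi(t_2)U$, one can only bound the $\tilde\Phi$-displacement of second coordinates by a multiple of $|\psi(t_2)-\psi(t_1)|$, which is not controlled by $|t_2-t_1|$ when $\psi$ is merely a cuspidal function. Overcoming this requires either choosing $\Phi$ to be positively $1$-homogeneous (which strongly constrains the admissible base-level equivalences, essentially reducing the question to one on the unit sphere of $\rr^{n-1}$), or merging the base-regularization with the $\psi$-regularization into a single global construction that handles both deformations simultaneously. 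This coupling between the base geometry and the cuspidal profile is, I believe, precisely the technical difficulty alluded to by the authors.
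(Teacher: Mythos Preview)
The statement you are addressing is labelled a \emph{conjecture} in the paper, and the paper gives no proof of it; the authors explicitly write that the construction ``appears to be technically challenging but we expect it to be doable.'' So there is no proof in the paper to compare your attempt against, and your write-up is, correctly, a strategy sketch rather than a proof.

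Your two-step plan (first regularize the base $U$, then run the argument of Lemma~\ref{le:LipcF} and Proposition~\ref{prop:bilip} with the Minkowski functional $\rho_V$ in place of $|x|$) is a natural extrapolation of the paper's method, and the second step is essentially sound once $V$ is star-shaped with respect to the origin: $\rho_V$ is then Lipschitz and positively $1$-homogeneous, and the formulas in (\ref{eq:bilip}) go through with $|x|$ replaced by $\rho_V(x)$.

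There is, however, a genuine gap in the first step. The assertion that an arbitrary bounded Lipschitz domain $U\subset\rr^{n-1}$ is carried by a global bi-Lipschitz map of $\rr^{n-1}$ onto a star-shaped Lipschitz domain is not a general fact: a Lipschitz domain need not even be homeomorphic to a ball (an annulus in $\rr^2$ is Lipschitz but not simply connected), and star-shaped domains are contractible. So unless one adds a topological hypothesis on $U$, the reduction fails outright. Even under such a hypothesis, the global bi-Lipschitz equivalence to a star-shaped domain is a nontrivial statement that would need its own argument.

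The obstacle you isolate in the scale-adapted lift $(t,x)\mapsto(t,\psi(t)\Phi(x/\psi(t)))$ is real: differentiating in $t$ produces a term of size $|\psi'(t)|$ times a quantity that does not vanish unless $\Phi$ is $1$-homogeneous, and for a merely left-continuous increasing $\psi$ this is uncontrolled. Your diagnosis that the base change and the $\psi$-regularization must be performed in a single coupled construction, rather than sequentially, is exactly the difficulty the authors allude to, and your sketch does not resolve it.
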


\end{document}